\documentclass[oneside,11pt,reqno]{amsart}
\usepackage{stmaryrd}
\usepackage{bbm}
\usepackage{mathrsfs}
\usepackage{latexsym,amsxtra,xypic}
\usepackage[dvips]{graphicx}
\usepackage{dsfont}
\usepackage[all]{xy}
\usepackage{amscd,graphics}
\usepackage{amsmath,amsfonts,amsthm,amssymb}
\usepackage{latexsym,amsmath}
\usepackage{graphicx,psfrag}

\textwidth 15cm \textheight 20cm \oddsidemargin .1truein

\newtheorem{thm}{Theorem}[section]
\newtheorem{lem}[thm]{Lemma}
\newtheorem{cor}[thm]{Corollary}
\newtheorem{pro}[thm]{Proposition}
\newtheorem{ex}[thm]{Example}
\newtheorem{rmk}[thm]{Remark}
\newtheorem{defi}[thm]{Definition}

\newtheorem{nota}[thm]{Notation}
\title
{SU(2)-Cyclic Surgeries on Knots}

\author{Jianfeng Lin}

\begin{document}

\maketitle 
\begin{abstract}
 A surgery on a knot in $S^{3}$ is called $SU(2)$-cyclic if it gives a manifold whose fundamental group has no non-cyclic $SU(2)$ representations. Using holonomy perturbations on the Chern-Simons functional, we prove that two $SU(2)$-cyclic surgery coefficients $\frac{p_{1}}{q_{1}}$ and $\frac{p_{2}}{q_{2}}$ should satisfy $|p_{1}q_{2}-p_{2}q_{1}|\leq |p_{1}|+|p_{2}|$. This is an analog of Culler-Gordon-Luecke-Shalen's cyclic surgery theorem.
\end{abstract}

\section{Introduction}

\begin{defi}
 A closed orientable $3$ manifold $M$, $M$ is called $SU(2)$-cyclic (or $SO(3)$-cyclic) if there exists no homomorphism $\phi: \pi_{1}(M)\rightarrow SU(2) \text{ (or }SO(3))$ with non-cyclic image.
\end{defi}

Suppose $K\subset S^{3}$ is a knot. For $r\in \mathds{Q}$, we denote the manifold obtained by doing $r$-surgery on $K$ by $K(r)$.

\begin{defi}
A surgery on $K$ with coefficient $r$ is called $SU(2)$-cyclic (or $SO(3)$-cyclic) if $K(r)$ is $SU(2)$-cyclic (or $SO(3)$-cyclic).
\end{defi}

We have the following exact sequence:
$$0\rightarrow \mathbb{Z}_{2}\rightarrow SU(2)\rightarrow SO(3)\rightarrow 0$$

It's easy to see that an $SO(3)$-cyclic surgery is always an $SU(2)$-cyclic surgery. Using some basic obstruction theory, we get:

\begin{lem}\label{homomorphism lift}
If $r=\frac{p}{q}$ is an $SU(2)$-cyclic surgery with $p$ odd, then $r$ is an $SO(3)$-cyclic surgery.
\end{lem}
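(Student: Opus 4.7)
The plan is to argue by contrapositive: assume $\phi : \pi_{1}(K(p/q)) \to SO(3)$ has non-cyclic image and produce a lift $\tilde{\phi} : \pi_{1}(K(p/q)) \to SU(2)$ whose image is also non-cyclic, contradicting $SU(2)$-cyclicity. The central ingredient is that the obstruction theory for lifting through the double cover $SU(2) \to SO(3)$ is controlled by a mod-$2$ class on $M = K(p/q)$, and this class has no room to be nonzero when $H_{1}(M)$ has odd order.

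First I would recall the obstruction. Since the kernel of $SU(2) \to SO(3)$ is $\mathbb{Z}_{2}$ and this kernel is central, the sequence is a central extension, and the obstruction to lifting a homomorphism $\phi : G \to SO(3)$ to $SU(2)$ is the pullback under $\phi$ of the class in $H^{2}(BSO(3); \mathbb{Z}_{2})$ classifying the extension. Applied with $G = \pi_{1}(M)$ via the classifying map $M \to B\pi_{1}(M)$, one obtains an obstruction class $o(\phi) \in H^{2}(M;\mathbb{Z}_{2})$, and $\phi$ lifts if and only if $o(\phi) = 0$.

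Next I would compute $H^{2}(M; \mathbb{Z}_{2})$. Since $M$ is a closed orientable $3$-manifold and $\mathbb{Z}_{2}$ is a field, Poincaré duality gives $H^{2}(M;\mathbb{Z}_{2}) \cong H_{1}(M;\mathbb{Z}_{2})$. By the universal coefficient theorem,
\[
H_{1}(M;\mathbb{Z}_{2}) \;\cong\; H_{1}(M)\otimes \mathbb{Z}_{2} \;\oplus\; \mathrm{Tor}(H_{0}(M),\mathbb{Z}_{2}).
\]
The Tor term vanishes because $H_{0}(M) = \mathbb{Z}$ is free, and $H_{1}(M) = \mathbb{Z}/p$ because $r = p/q$-surgery on a knot in $S^{3}$ has first homology of order $|p|$. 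Since $p$ is odd, $(\mathbb{Z}/p) \otimes \mathbb{Z}_{2} = 0$. Hence $H^{2}(M;\mathbb{Z}_{2}) = 0$, so $o(\phi)$ vanishes automatically and a lift $\tilde{\phi}$ exists.

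Finally, I would verify that $\tilde{\phi}$ inherits a non-cyclic image. If $\pi : SU(2)\to SO(3)$ denotes the double cover, then by construction $\pi \circ \tilde{\phi} = \phi$, so $\pi(\mathrm{im}\,\tilde{\phi}) = \mathrm{im}\,\phi$. A homomorphic image of a cyclic group is cyclic; thus if $\mathrm{im}\,\tilde{\phi}$ were cyclic, $\mathrm{im}\,\phi$ would be cyclic too, contradicting our assumption. This contradicts $SU(2)$-cyclicity of $K(p/q)$, completing the proof. The only subtlety I anticipate is being careful that the obstruction-theoretic framework really is the group-cohomology one and that the pullback to $H^{2}(M;\mathbb{Z}_{2})$ carries all the information; but once this is spelled out, the rest is a direct homological computation.
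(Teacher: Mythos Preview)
Your proposal is correct and is precisely the ``basic obstruction theory'' argument the paper alludes to (the paper states the lemma without a written proof). The one point you flag yourself---that the obstruction naturally lives in $H^{2}(\pi_{1}(M);\mathbb{Z}_{2})$ rather than $H^{2}(M;\mathbb{Z}_{2})$---is harmless: the comparison map $H^{2}(B\pi_{1}(M);\mathbb{Z}_{2})\to H^{2}(M;\mathbb{Z}_{2})$ is injective (build $K(\pi_{1}M,1)$ from $M$ by attaching cells of dimension $\geq 3$), or alternatively one can phrase the lift as lifting the flat $SO(3)$-bundle to a flat $SU(2)$-bundle, whose obstruction is $w_{2}\in H^{2}(M;\mathbb{Z}_{2})$ directly.
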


In \cite{KM}, Kronheimer and Mrowka proved the following theorem:

\begin{thm}[Kronheimer, Mrowka 2003 \cite{KM}]\label{|r|>2}
Any $r$-surgery on a nontrivial knot with surgery coefficient $|r|\leq2$ is not $SU(2)$-cyclic.
\end{thm}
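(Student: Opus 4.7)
The plan is to prove the contrapositive: for every nontrivial knot $K\subset S^{3}$ and every rational $r$ with $|r|\leq 2$, the group $\pi_{1}(K(r))$ admits an irreducible $SU(2)$ representation. The main tool is instanton Floer theory with holonomy perturbations of the Chern--Simons functional, in the spirit of Floer's original work and Kronheimer--Mrowka's proof of Property~P (the $r=0$ case).

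First, I would reduce to the $SU(2)$ character variety $R(X)$ of the knot exterior $X=S^{3}\setminus \nu(K)$. Restriction to the boundary torus sends characters into the pillowcase $P=\mathrm{Hom}(\pi_{1}(T^{2}),SU(2))/\mathrm{conj}$, parametrized by meridian and longitude holonomy angles $(\alpha,\beta)$. Since the longitude is null-homologous in $X$, abelian characters of $\pi_{1}(X)$ map into the edge $\{\beta=0\}$. An irreducible representation of $\pi_{1}(K(r))$ for $r=p/q$ then corresponds to a point in the image of $R(X)\to P$ which lies on the surgery line $\{p\alpha+q\beta\equiv 0 \pmod{2\pi}\}$ and off this abelian edge.

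Next, I would invoke the key geometric input: for every nontrivial knot $K$, the image of $R(X)$ in $P$ contains a nontrivial arc that emerges from the origin into the interior of the pillowcase. Granting this, an elementary intersection argument in $P$ shows that for every slope $p/q$ with $|p/q|\leq 2$ the surgery line must cross the arc at a non-abelian character, producing the required representation. The numerical threshold $|r|\leq 2$ is governed by how far into the pillowcase the existence of the arc can be certified before one risks meeting only reducible or parabolic characters.

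The main obstacle, and the analytic heart of the argument, is producing this arc. One constructs, for each fixed meridian holonomy angle $\alpha$, a holonomy-perturbed Chern--Simons functional on the space of $SU(2)$ connections on $S^{3}$ that are singular along $K$ with prescribed meridian eigenvalue $e^{i\pi\alpha}$, and one proves non-vanishing of the associated singular instanton Floer homology for every nontrivial knot. The technical work---transversality for holonomy perturbations, compactness modulo bubbling for the perturbed singular ASD equations, and a surgery exact triangle that propagates non-vanishing from a computable base case such as the unknot---is the deep content of Kronheimer--Mrowka. Once the arc is produced, the pillowcase intersection step is essentially formal.
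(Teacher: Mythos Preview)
The paper does not prove this theorem; it is quoted from \cite{KM} and used as a black box (see also Theorem~\ref{existence of critical points} and the remark following it, which summarize the underlying machinery). So there is no in-paper proof to compare against, only the paper's description of what goes into Kronheimer--Mrowka's argument. Against that description, your sketch has the right philosophy but two concrete pieces are off.

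First, the Floer theory is not singular instanton homology indexed by a continuous meridian-holonomy parameter $\alpha$. Kronheimer--Mrowka work on the closed manifold $K(0)$ with the nontrivial $SO(3)$ bundle (the one with $c_{1}$ dual to the meridian), and prove that for \emph{any} holonomy perturbation the perturbed Chern--Simons functional on that fixed bundle has a critical point. This is exactly Theorem~\ref{existence of critical points} here. One then chooses the perturbation so that a critical point forces an irreducible representation of $\pi_{1}(K(r))$; there is no family of singular theories and no ``arc'' is produced directly.

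Second, and more seriously, the nonvanishing input is not a surgery exact triangle seeded by the unknot. As the paper's remark after Theorem~\ref{existence of critical points} spells out, the existence of critical points for nontrivial $K$ rests on the chain Gabai (taut foliations on $K(0)$) $\Rightarrow$ Eliashberg--Thurston (symplectic filling) $\Rightarrow$ Taubes (Seiberg--Witten of symplectic $4$-manifolds) $\Rightarrow$ Feehan--Leness (Witten's conjecture relating SW and Donaldson invariants). An exact-triangle argument from the unknot cannot supply this: the unknot's $0$-surgery is $S^{1}\times S^{2}$, and nothing about its instanton theory distinguishes nontrivial knots.

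Finally, your explanation of the threshold $|r|\leq 2$ is too vague to be an argument. The actual constraint is combinatorial: a holonomy perturbation replaces the longitude condition $\eta\equiv 0$ by $\eta=g(\theta)$ where $g$ must be odd and $2\pi$-periodic; the surgery line of slope $r$ can be approximated by such a graph over a fundamental domain precisely when $|r|\leq 2$. That is the content to be supplied in place of ``how far into the pillowcase the arc can be certified.''
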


In particular, this theorem gave a proof for the Property-P Conjecture:

\begin{cor}
[Kronheimer, Mrowka 2003 \cite{KM}] A nontrivial surgery on a nontrivial knot does not give simply connected $3$-manifold.
\end{cor}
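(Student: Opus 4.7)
The plan is to show that the Property-P corollary follows quickly from Theorem \ref{|r|>2} by a homological argument constraining the possible surgery coefficients. The key observation is that simply connected manifolds are automatically $SU(2)$-cyclic, so Theorem \ref{|r|>2} directly applies once we verify that the surgery coefficient satisfies $|r|\leq 2$.

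First, I would argue by contradiction: suppose $K\subset S^{3}$ is a nontrivial knot and $K(r)$ is simply connected for some nontrivial surgery coefficient $r\in \mathbb{Q}$ (here ``nontrivial surgery'' means $r\neq \infty$, since $\infty$-surgery returns $S^{3}$). Because $\pi_{1}(K(r))=1$, every homomorphism $\pi_{1}(K(r))\to SU(2)$ has trivial (in particular cyclic) image, so $K(r)$ is $SU(2)$-cyclic.

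Next, I would use the standard homology computation for Dehn surgery: if $r=\tfrac{p}{q}$ in lowest terms with $p\neq 0$, then $H_{1}(K(r);\mathbb{Z})\cong \mathbb{Z}/|p|\mathbb{Z}$, while $r=0$ gives $H_{1}\cong \mathbb{Z}$. The assumption $\pi_{1}(K(r))=1$ forces $H_{1}(K(r))=0$, hence $|p|=1$, i.e.\ $r=\pm\tfrac{1}{q}$ for some nonzero integer $q$. This immediately yields $|r|=\tfrac{1}{|q|}\leq 1\leq 2$.

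Combining these two observations, $K(r)$ is an $SU(2)$-cyclic surgery on the nontrivial knot $K$ with $|r|\leq 2$, directly contradicting Theorem \ref{|r|>2}. There is no real obstacle here since the deep input (Kronheimer--Mrowka) is being quoted as a black box; the only thing to be careful about is the convention on trivial surgery and making sure the homology calculation covers all cases of $r\in\mathbb{Q}\cup\{\infty\}$.
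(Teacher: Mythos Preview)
Your argument is correct and matches the paper's intended reasoning: the paper does not spell out a proof but simply presents the corollary as an immediate consequence of Theorem~\ref{|r|>2} (``In particular, this theorem gave a proof for the Property-P Conjecture''), and your homology computation showing $|p|=1$ (hence $|r|\leq 1$) together with the observation that simply connected manifolds are trivially $SU(2)$-cyclic is exactly the standard way to extract this implication.
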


Obviously, lens spaces are all $SU(2)$-cyclic and $SO(3)$-cyclic. Thus all cyclic surgeries (the surgeries which give lens spaces) are $SO(3)$-cyclic. Therefore, we have:

\begin{ex}\label{torus knot}
The $pq-\frac{1}{r}$ $(r\in \mathds{Z})$ surgeries on the $(p,q)$-torus knot are cyclic and hence $SO(3)$-cyclic.
\end{ex}

Dunfield \cite{Dunfield} gives the following example:
\begin{ex}\label {pretzel knot}
The  $18, \frac{37}{2}, 19$ surgeries on the $(-2,3,7)$-pretzel knot are $SO(3)$-cyclic. The $18,19$ surgeries give lens spaces, while $K(\frac{37}{2})$ is a graph manifold obtained by gluing the left-handed trefoil knot complement and the right-handed trefoil complement.
\end{ex}

Another related theorem is Culler-Gordon-Luecke-Shalen's cyclic surgery theorem (we only state the case for knot surgery):

\begin{thm}
[Culler-Gordon-Luecke-Shalen \cite{CGLS}]\label{Cyclic} Suppose that $K$ is not a torus knot and $r,s$ are both cyclic surgeries, then $\triangle (r,s)\leq1$.
\end{thm}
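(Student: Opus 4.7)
The plan is to follow the character-variety approach pioneered by Culler and Shalen. Let $M = S^{3} \setminus \nu(K)$ denote the knot exterior, and let $X(M)$ be the affine algebraic variety of $SL(2,\mathbb{C})$ characters of $\pi_{1}(M)$. For every $\gamma \in \pi_{1}(M)$, the trace-squared function $I_{\gamma}(\chi) = \chi(\gamma)^{2}-4$ is regular on $X(M)$, and the restriction map to the boundary character variety $X(\partial M)$ encodes the peripheral structure. Because $K$ is non-trivial, $\pi_{1}(M)$ admits irreducible $SL(2,\mathbb{C})$ representations (by standard Culler-Shalen arguments, or by geometrization), so $X(M)$ contains an algebraic curve $X_{0}$ on which the restriction to $X(\partial M)$ is non-constant.

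Next I would build the Culler-Shalen seminorm $\|\cdot\|$ on $H_{1}(\partial M;\mathbb{R}) \cong \mathbb{R}^{2}$, defined on a smooth projective completion $\widetilde{X}_{0}$ of $X_{0}$ by $\|\alpha\| = \deg(I_{\alpha}|_{\widetilde{X}_{0}})$ and extended by homogeneity. The non-torus-knot hypothesis is precisely what rules out a slope $\alpha$ along which $I_{\alpha}$ is constant on $X_{0}$ (which would force a Seifert fibration or an essential torus, using Culler-Shalen's theory relating ideal points of $\widetilde{X}_{0}$ to essential surfaces in $M$); thus $\|\cdot\|$ is a genuine norm. Its unit ball $B$ is a convex, centrally symmetric polygon with rational vertices.

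The key topological-to-algebraic bridge is then: if $r$-surgery is cyclic, then $\|r\| = s$, where $s = \min_{\beta \neq 0}\|\beta\|$ ranges over non-zero boundary classes. The idea is that $\pi_{1}(K(r))$ being cyclic forces every $SL(2,\mathbb{C})$ representation of $\pi_{1}(K(r))$ to be reducible, so no irreducible character in $X_{0}$ extends across the Dehn filling; combined with a careful count of poles and contributions from the ideal points of $\widetilde{X}_{0}$, this forces enough zeros of $I_{r}$ to saturate the minimum of the norm. Consequently two cyclic surgery slopes $r_{1}$ and $r_{2}$ both correspond to primitive lattice points on $\partial(sB)$.

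The proof concludes with a purely combinatorial lemma in the plane: if two primitive lattice points $v_{1}, v_{2}\in \mathbb{Z}^{2}$ lie on the boundary of a convex, centrally symmetric polygon whose lattice minimum is attained on $\partial(sB)$, then $|\det(v_{1},v_{2})|\leq 1$; applied to $v_{i}=(p_{i},q_{i})$ this is exactly $\triangle(r_{1},r_{2})\leq 1$. The principal obstacle is the bridge step: translating the representation-theoretic failure ``$\pi_{1}(K(r))$ has only reducible representations'' into a sharp numerical statement about the order of vanishing of $I_{r}$ along $\widetilde{X}_{0}$, which requires a delicate bookkeeping of ideal points and, in effect, the full Culler-Shalen correspondence between algebraic degenerations of characters and incompressible surfaces in $M$.
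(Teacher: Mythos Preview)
This theorem is not proved in the paper at all: it is quoted from \cite{CGLS} as background and motivation for the paper's own Theorem~\ref{main theorem}, which is an $SU(2)$-analog established by completely different means (holonomy perturbations of the Chern--Simons functional on $K(0)$). There is therefore no proof in the paper to compare your proposal against; your outline is an attempt to reconstruct the original Culler--Gordon--Luecke--Shalen argument, and in broad strokes it does follow their character-variety strategy.

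That said, your final ``purely combinatorial lemma'' is false as stated, and this is a genuine gap. Take the $\ell^{\infty}$ norm on $\mathbb{R}^{2}$: its minimum over nonzero lattice points is $1$, and the primitive vectors $(1,1)$ and $(1,-1)$ both lie on the boundary of the minimal ball, yet $|\det((1,1),(1,-1))|=2$. So the bound $\triangle\leq 1$ does \emph{not} follow from convex geometry alone. What CGLS actually use is the additional structure of the Culler--Shalen norm together with the dichotomy that a cyclic filling slope is either a strict boundary slope (handled by a separate incompressible-surface argument) or has norm equal to the minimum $s$; the vertices of the norm polygon lie in the directions of boundary slopes, so non-boundary cyclic slopes sit in the interiors of edges of $\partial(sB)$, and it is this extra constraint that forces pairwise distance at most $1$. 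Your outline collapses this into a single convex-geometry statement that does not hold, so the concluding step would need to be reworked to separate the boundary-slope case and to exploit the specific form $\|\gamma\|=\sum_{i}a_{i}\,\triangle(\gamma,\beta_{i})$ of the norm.
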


Here for two rational numbers $r=\frac{p_{1}}{q_{1}}$ and $s=\frac{p_{2}}{q_{2}}$, the distance  $\triangle(r,s)$ is defined to be $|p_{1}q_{2}-p_{2}q_{1}|$.

Since $\frac{1}{0}$-surgery is always cyclic, this theorem implies that when $K$ is not a torus knot, $r$-surgery can be cyclic only if $r\in \mathds{Z}$. Moreover, there are at most two such integers, and if
there are two then they must be successive.

Although Example \ref{pretzel knot} shows that Theorem \ref{Cyclic} is not true for $SU(2)$-cyclic or $SO(3)$-cyclic surgeries, we have the following analogous result, which is the main theorem of this paper.

\begin{thm}\label{main theorem}
 Consider a nontrivial knot $K\subset S^{3}$ and two surgeries with coefficients $r_{1}=p_{1}/q_{2}$ and $r_{2}=p_{2}/q_{2}$. We have the following:
\begin {itemize}
\item If $r_{1}, r_{2}$ are both $SU(2)$-cyclic, then $\triangle(r_{1},r_{2})\leq |p_{1}|+|p_{2}|$.
\item If $r_{1}, r_{2}$ are both $SO(3)$-cyclic, then $2\triangle(r_{1},r_{2})\leq |p_{1}|+|p_{2}|$.
\end{itemize}
\end{thm}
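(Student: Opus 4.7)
The plan is to convert the $SU(2)$-cyclic hypothesis into an intersection problem on the \emph{pillowcase} $\mathscr{P}$, the $SU(2)$ character variety of the boundary torus $T = \partial(S^{3}\setminus\nu(K))$, and then produce an arc of irreducible characters by holonomy perturbation whose combinatorics in $\mathscr{P}$ forces the desired inequality. Model $\mathscr{P}\cong \mathbb{R}^{2}/\Gamma$, where $\Gamma$ is generated by the integer translations and the hyperelliptic involution $(x,y)\mapsto (-x,-y)$. The reducible locus of the $SU(2)$-character variety $\chi(X)$ of the knot complement $X = S^{3}\setminus\nu(K)$ maps onto the arc $A = \{y=0\}\subset\mathscr{P}$, and each slope $r=p/q$ determines a curve $L_{r} = \{px+qy\in\mathbb{Z}\}/\Gamma$ in $\mathscr{P}$. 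A non-cyclic $SU(2)$-representation of $\pi_{1}(K(r))$ is equivalent to an irreducible character in $\chi(X)$ whose pillowcase image lies on $L_{r}\setminus A$; hence the hypothesis that both $r_{1}$ and $r_{2}$ are $SU(2)$-cyclic says that the image of $\chi^{\mathrm{irr}}(X)$ in $\mathscr{P}$ is disjoint from $(L_{r_{1}}\cup L_{r_{2}})\setminus A$.

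I would then adapt the instanton and holonomy perturbation machinery of Kronheimer--Mrowka \cite{KM} to build, for generic perturbation data on $X$, a compact one-manifold $\Lambda$ of perturbed irreducible flat $SU(2)$-connections whose pillowcase image is an immersed arc bounded on $A$. The existence of $\Lambda$ is underwritten by Theorem \ref{|r|>2}: a cobordism between the moduli space for $K(r_{1})$ and the one for $K(r_{2})$, parametrised by a path of slope-conditions between $L_{r_{1}}$ and $L_{r_{2}}$, yields a non-empty one-dimensional moduli space of irreducibles after the perturbation is chosen generically so as to rule out bubbling and reducible degenerations. The endpoints of the pillowcase image of $\Lambda$ lie in $A\cap L_{r_{1}}$ together with $A\cap L_{r_{2}}$, and since these points parametrise the abelian $SU(2)$-representations of the respective Dehn fillings, at most $|p_{1}|$ endpoints lie on $L_{r_{1}}$ and at most $|p_{2}|$ on $L_{r_{2}}$.

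With $\Lambda$ in hand, the proof reduces to a planar intersection argument on $\mathscr{P}\cong S^{2}$. The curves $L_{r_{1}}$ and $L_{r_{2}}$ meet transversely in a set of pillowcase points whose count is controlled by $\Delta(r_{1},r_{2})$. The arc $\Lambda$ is forbidden by the cyclic hypothesis from crossing $L_{r_{i}}$ away from $A$, yet it must realise paths joining its endpoints on $A$ while threading the complement of $L_{r_{1}}\cup L_{r_{2}}$ in $\mathscr{P}$. A mod-$2$ intersection / region-counting argument, exploiting that $\mathscr{P}$ is a sphere and that the endpoints of $\Lambda$ fit into at most $|p_{1}|+|p_{2}|$ slots, then rules out configurations with $\Delta(r_{1},r_{2})>|p_{1}|+|p_{2}|$. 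The $SO(3)$-refinement comes by combining Lemma \ref{homomorphism lift} with the central $\mathbb{Z}/2$-symmetry of $SU(2)$: this symmetry pairs up endpoints of $\Lambda$ and cuts the effective endpoint count in half, improving the bound to $2\Delta(r_{1},r_{2})\leq|p_{1}|+|p_{2}|$.

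The principal technical obstacle is the construction of the perturbed moduli space $\Lambda$. I must verify that the moduli space of perturbed flat $SU(2)$-connections on $X$ with slope-dependent boundary conditions, compactified along the one-parameter deformation between the two fillings, is generically a compact one-manifold whose boundary matches the expected abelian characters, and I must do so with enough control of the pillowcase image to read off the endpoint count. This is precisely the kind of analytic input that occupies the bulk of \cite{KM}, so the real labour lies in adapting those techniques to the present two-slope setting; the concluding pillowcase combinatorics, by contrast, is essentially elementary planar topology.
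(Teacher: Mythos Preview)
Your pillowcase set-up is correct, but the heart of the argument you propose is not the one in the paper and, as written, has a real gap. You want to manufacture a one-manifold $\Lambda$ of perturbed irreducible characters by running a slope-parametrised cobordism between the instanton moduli spaces of $K(r_{1})$ and $K(r_{2})$, and then count its endpoints on the reducible arc. No such cobordism is built in \cite{KM} or here; what Kronheimer--Mrowka supply (Theorem~\ref{existence of critical points}) is a \emph{single} existence statement: for \emph{any} holonomy perturbation on $K(0)$ the perturbed Chern--Simons functional has a critical point. There is no one-parameter family of moduli spaces, no compactness/transversality package for varying the boundary slope, and hence no $\Lambda$ to feed into your region-counting. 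Your concluding paragraph already concedes that this construction is ``the principal technical obstacle''; in fact it is the whole proof, and it is not available.

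The paper's route is logically the reverse of yours. One assumes $d_{1}(r_{1},r_{2})+d_{2}(r_{1},r_{2})<2\pi$ (equivalently, the claimed inequality fails) and then constructs, by elementary planar combinatorics, a broken line $L$ in the strip $W$ with image in $S(r_{1})\cup S(r_{2})\cup\{\theta\in\pi\mathds{Z}\}$, hence disjoint from $R_{K}^{*}$. After a small modification near the corners and (in the same-sign case) a shear by an odd $2\pi$-periodic function $g_{1}$, the image of $L$ becomes the graph $\{\eta=g_{2}(\theta)\}$ of an odd $2\pi$-periodic function that misses $R_{K}+(0,-\pi)$. Choosing $f_{1},f_{2}$ with $f_{1}'=g_{1}$, $f_{2}'=-g_{2}$ produces a holonomy perturbation for which Lemma~\ref{boundary value} forces $\widehat{CS}$ to have \emph{no} critical points, contradicting Theorem~\ref{existence of critical points}. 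So the analytic input is used once, as a black box, and the work is the combinatorial construction of $L$; your proposal inverts this balance. Also, your $SO(3)$ step via Lemma~\ref{homomorphism lift} does not work when $p_{1}$ or $p_{2}$ is even; the paper instead reruns the same broken-line argument with the denser line system $\widehat{S}(r_{i})$ in place of $S(r_{i})$, which directly yields the factor of $2$.
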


Combining this theorem with Lemma \ref{homomorphism lift}, we get the following corollaries.

\begin{cor}\label{cor1}

Suppose $r_{1},r_{2}$ are both $SU(2)$-cyclic. If $p_{1}$ is odd, then $2\triangle(r_{1},r_{2})\leq 2|p_{1}|+|p_{2}|$.
 If $p_{1},p_{2}$ are both odd, then $2\triangle(r_{1},r_{2})\leq |p_{1}|+|p_{2}|$.

\end{cor}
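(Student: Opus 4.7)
The plan is to deduce Corollary \ref{cor1} directly from Theorem \ref{main theorem} by invoking Lemma \ref{homomorphism lift} to trade the hypothesis of $SU(2)$-cyclicity for the stronger hypothesis of $SO(3)$-cyclicity on any slope whose numerator is odd. This is what allows us to invoke the sharper second bullet of Theorem \ref{main theorem} in place of the first.

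The second assertion is a direct application. Under the hypothesis that $p_1$ and $p_2$ are both odd, Lemma \ref{homomorphism lift} applied to each slope shows that $r_1$ and $r_2$ are both $SO(3)$-cyclic. The hypotheses of the second bullet of Theorem \ref{main theorem} are then met, and we conclude $2\triangle(r_1,r_2) \leq |p_1|+|p_2|$ at once, with no further work.

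For the first assertion, only $p_1$ is assumed odd, so Lemma \ref{homomorphism lift} promotes $r_1$ to an $SO(3)$-cyclic surgery but leaves $r_2$ merely $SU(2)$-cyclic. Neither bullet of Theorem \ref{main theorem} applies verbatim, so the plan here is to extract from the proof of Theorem \ref{main theorem} a mixed bound valid when only one of the two surgeries is known to be $SO(3)$-cyclic. The factor of two separating the two bullets of the main theorem reflects the two-to-one cover $SU(2) \to SO(3)$, and since this cover acts once per slope, applying the improvement on the $r_1$ side alone should yield exactly the asymmetric inequality $2\triangle(r_1,r_2) \leq 2|p_1|+|p_2|$ claimed in the corollary.

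The principal obstacle I anticipate is not algebraic but structural: one has to return to the argument behind Theorem \ref{main theorem} and check that the contributions of the two slopes decouple sufficiently that the gain from the $SU(2)\to SO(3)$ improvement can be applied to a single slope in isolation. If the counting argument at the core of Theorem \ref{main theorem} expresses its bound as a sum of independent contributions indexed by the two slopes, then the required mixed inequality drops out immediately; otherwise a modest reorganization of the argument will be needed, but no new analytic input should be required.
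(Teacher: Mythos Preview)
Your approach is correct, and your instinct that the mixed case requires re-entering the proof of Theorem \ref{main theorem} is right. The paper handles this slightly more directly, however, and in a way that bypasses Lemma \ref{homomorphism lift} entirely.

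The paper's route is to observe that the proof of the $SU(2)$-cyclic bullet of Theorem \ref{main theorem} already establishes the finer inequality $d_1(r_1,r_2) + d_2(r_1,r_2) \geq 2\pi$, where the quantities $d_i$ depend by definition on the parity of $p_{3-i}$. The underlying reason is purely combinatorial: when $p$ is odd one has $S(r) = \widehat{S}(r)$, so the family of lines used in the $SU(2)$ argument is automatically the denser one for any odd-numerator slope, and the adjacent-intersection spacings $d_1, d_2$ record this. Unwinding $d_1 + d_2 \geq 2\pi$ in the two parity patterns gives both assertions of the corollary at once, without ever invoking $SO(3)$-cyclicity or Lemma \ref{homomorphism lift}.

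Your route via Lemma \ref{homomorphism lift} arrives at the same destination --- promoting $r_1$ to $SO(3)$-cyclic and then running the broken-line construction with $\widehat{S}(r_1)$ against $S(r_2)$ --- and the ``decoupling into a sum of per-slope contributions'' you anticipate is exactly the structure of $d_1 + d_2 \geq 2\pi$. The paper's framing simply makes explicit that this decoupling is already present in the $SU(2)$ proof, so no separate mixed version of the theorem needs to be stated or proved.
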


\begin{cor}\label{cor2}
If $r_{1},r_{2}$ on $K$ are both $SO(3)$-cyclic surgeries, then $r_{1}r_{2}>0$. If $r_{1},r_{2}$ on $K$ are both $SU(2)$-cyclic surgeries, then $r_{1}r_{2}>0$ unless $r_{1}$ and $r_{2}$ are both even integers.
\end{cor}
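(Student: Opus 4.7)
The plan is to argue by contradiction. Suppose $r_1 r_2 \leq 0$. Since Theorem \ref{|r|>2} rules out the case $r_i=0$ (as $|0|\leq 2$), we may assume without loss of generality that $r_1<0<r_2$. Writing $r_i = p_i/q_i$ in lowest terms with $q_i\geq 1$, the two numerators have opposite signs, so the cross-product does not cancel but reinforces:
\[\triangle(r_1,r_2)=|p_1 q_2 - p_2 q_1|=|p_1|\,q_2+|p_2|\,q_1\geq |p_1|+|p_2|,\]
with equality exactly when $q_1=q_2=1$. Moreover Theorem \ref{|r|>2} gives $|r_i|>2$, hence $|p_i|>2$; in particular $|p_i|>0$.

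For the $SO(3)$-cyclic case, feeding this lower bound into the inequality $2\triangle(r_1,r_2)\leq |p_1|+|p_2|$ of Theorem \ref{main theorem} produces $2(|p_1|+|p_2|)\leq |p_1|+|p_2|$, which is absurd. Hence $r_1 r_2>0$ in this case.

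For the $SU(2)$-cyclic case, the weaker bound $\triangle(r_1,r_2)\leq |p_1|+|p_2|$ from Theorem \ref{main theorem}, combined with the lower bound above, forces $q_1=q_2=1$, so both surgery coefficients are integers and $\triangle(r_1,r_2)=|p_1|+|p_2|$ exactly. If in addition $p_1$ is odd, I would invoke Corollary \ref{cor1} to get the strengthened bound $2\triangle(r_1,r_2)\leq 2|p_1|+|p_2|$, which together with $\triangle(r_1,r_2)=|p_1|+|p_2|$ rearranges to $|p_2|\leq 0$, contradicting $|p_2|>0$. A symmetric argument rules out $p_2$ odd. The only remaining possibility is that both $p_1$ and $p_2$ are even, giving the exception stated in the corollary.

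The substantive content is already packaged in Theorem \ref{main theorem} and Corollary \ref{cor1}; what is left is sign-and-parity bookkeeping. The only subtle step is invoking Theorem \ref{|r|>2} at the outset, both to exclude $r_i=0$ and to guarantee $|p_i|>0$, so that the non-strict inequalities of Theorem \ref{main theorem} actually force a strict contradiction in each of the disallowed configurations.
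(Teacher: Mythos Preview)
Your proof is correct and follows exactly the approach the paper intends; the paper itself does not spell out a proof of this corollary, only remarking that it is ``easy to prove using the main theorem,'' and your argument via Theorem~\ref{main theorem}, Corollary~\ref{cor1}, and Theorem~\ref{|r|>2} is the natural way to unpack that remark.
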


\begin{cor} \label{cor3}
For a nontrivial surgery on a nontrivial amphichiral knot $K$ with coefficient $r$, we have the following:
\begin{itemize}
\item It can never be $SO(3)$-cyclic.
 \item If it is $SU(2)$-cyclic, then $r$ is an even integer and some $\frac{r}{2}$-th root of unity is a root of $\Delta_{K}$ (the Alexander polynomial of $K$).
 \end{itemize}
\begin{rmk}
Actually, we haven't found any examples of $SU(2)$-cyclic surgeries on an amphichiral knot. It would be interesting to know whether there exists such a surgery.
\end{rmk}
\end{cor}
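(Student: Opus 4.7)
The plan is to combine Theorem~\ref{main theorem} and Corollary~\ref{cor2} with the reflection symmetry coming from amphichirality of $K$. An orientation-reversing self-homeomorphism of $S^{3}$ carrying $K$ to itself sends the $r$-framed filling of $S^{3}\setminus K$ to the $(-r)$-framed filling, so $K(r)$ and $K(-r)$ are homeomorphic up to orientation. Since $SU(2)$- and $SO(3)$-cyclicity are properties of $\pi_{1}$, whenever $K(r)$ is $SU(2)$-cyclic (respectively $SO(3)$-cyclic) so is $K(-r)$, and Theorem~\ref{main theorem} applies to the pair $(r_{1},r_{2})=(r,-r)$.

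Write $r=p/q$ with $\gcd(p,q)=1$ and $q\geq 1$; by Theorem~\ref{|r|>2} we have $|r|>2$, so $p\neq 0$ and the surgery is already nontrivial. With $r_{1}=p/q$, $r_{2}=-p/q$, one has $\triangle(r_{1},r_{2})=2|pq|$ and $|p_{1}|+|p_{2}|=2|p|$. Substituted into the $SO(3)$-bound of Theorem~\ref{main theorem} this reads $4|pq|\leq 2|p|$, forcing the impossible $|q|\leq 1/2$; this kills the first bullet. Substituted into the $SU(2)$-bound it reads $2|pq|\leq 2|p|$, giving $q=1$ and hence $r\in\mathbb{Z}$. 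Since then $r_{1}r_{2}=-r^{2}<0$, Corollary~\ref{cor2} forces both $r$ and $-r$ to be even integers; write $r=2n$.

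For the final clause I would invoke the classical description of reducible $SU(2)$-representations of $\pi_{1}(S^{3}\setminus K)$: up to conjugation they send the meridian $\mu$ to $\mathrm{diag}(e^{i\theta},e^{-i\theta})$, and they extend over $K(2n)$ exactly when $\theta=\pi k/n$, producing $n+1$ conjugacy classes. A classical result of Klassen says such a reducible is a limit of non-abelian $SU(2)$-representations of the knot complement exactly when $e^{2i\theta}=e^{2\pi ik/n}$ is a root of $\Delta_{K}$. If no $n$-th root of unity were a root of $\Delta_{K}$, then every reducible extending to $K(2n)$ would be ``isolated,'' and I expect the holonomy-perturbation count driving the proof of Theorem~\ref{main theorem} to yield a strict inequality $\triangle(r,-r)<|p_{1}|+|p_{2}|$, contradicting the equality $2|p|=2|p|$ we just achieved.

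The principal obstacle is precisely this last implication. Applying Theorem~\ref{main theorem} as a black box only delivers the coarse bound, so one must revisit the equality case of its proof and show that sharpness forces a bifurcation, i.e.\ a root of $\Delta_{K}$ among the relevant roots of unity; the amphichiral symmetry, which identifies the moduli spaces used for $r$ and $-r$ under reflection in the pillowcase, should be what pins this down. The other claims --- no $SO(3)$-cyclic nontrivial surgery, and the ``$r$ even integer'' half of the $SU(2)$ statement --- follow purely formally from Theorem~\ref{main theorem} and Corollary~\ref{cor2} as sketched above.
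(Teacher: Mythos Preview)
Your approach matches the paper's exactly: amphichirality gives the pair $(r,-r)$, Theorem~\ref{main theorem} kills the $SO(3)$ case and forces $q=1$, and Corollary~\ref{cor2} forces $r$ even. For the Alexander-polynomial clause the paper does precisely what you say must be done---it revisits the equality case $d_{1}+d_{2}=2\pi$ of the broken-line construction in the proof of Theorem~\ref{main theorem}---so your instinct is right, but two points are worth sharpening.

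First, the mechanism is not that one squeezes out a \emph{strict} inequality $\triangle<|p_{1}|+|p_{2}|$. Rather, in the borderline case the broken line $L$ may be forced to touch $\{\eta=0\}\cup\{\eta=2\pi\}$ at points $(\theta_{0},0)$ with $\theta_{0}\notin\pi\mathds{Z}$; if one can still modify $L$ to $\widehat{L}\subset W^{*}$ avoiding $R_{K}$ near each such point, the original contradiction (no critical points of $\widehat{CS}$) goes through unchanged. The only obstruction to this local modification is a sequence in $R_{K}^{*}$ accumulating on $(\theta_{0},0)$, i.e.\ irreducible representations converging to a reducible one with $\rho_{0}(m)=\mathrm{diag}(e^{i\theta_{0}},e^{-i\theta_{0}})$.

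Second, the paper invokes the Cooper--Culler--Gillet--Long--Shalen proposition (a character-variety statement: if a reducible lies on a component of $\chi(M)$ containing an irreducible, then an eigenvalue of $\rho(m)$ squared is a root of $\Delta_{K}$) rather than Klassen. This is the safer citation here, since what you actually have is a limit of irreducibles in the $SL(2,\mathds{C})$ character variety, and the CCGLS result applies directly. With $(\theta_{0},0)\in S(r_{1})$ and $p_{1}=r=2n$ even, one gets that $e^{2i\theta_{0}}$ is an $n$-th root of unity and a root of $\Delta_{K}$, which is exactly the conclusion.
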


We know that $\Delta_{K}(1)=\pm1$ for any knot $K$ while $\Phi_{p}(1)=p$ for any prime number $p$ ($\Phi_{p}$ is the $p$-th cyclotomic polynomial). Therefor the Alexander polynomial $\Delta_{K}$ never has the $p$-th root of unity as its root. We get:

\begin{ex}
If $p$ is prime, then the $2p$-surgery on an non-trivial amphichiral knot is not $SU(2)$-cyclic.
\end{ex}
\begin{rmk}
In \cite{KM}, Kronheimer and Mrowka asked whether there exists $SU(2)$-cyclic surgery with coefficient $3$ or $4$. We see that there exist no such surgeries for nontrivial amphichiral knots.
\end{rmk}

Using the criterion in Corollary \ref{cor3}, we checked the amphichiral knots with crossing number $\leq 10$ and get:

\begin{ex}
All the nontrivial amphichiral knots with crossing number $\leq10$ except perhaps $8_{18}$ and $10_{99}$ in Rolfsen's knot table admit no $SU(2)$-cyclic surgeries. For $8_{18}$ and $10_{99}$, we have no examples of $SU(2)$-cyclic surgeries.
\end{ex}

\begin{cor}\label{cor4}
Given a nontrivial knot $K$ and an integer $q$, there exist at most finitely many $p\in \mathbb{Z}$ such that $(p,q)=1$ and the $\frac{p}{q}$-surgery on $K$ is $SO(3)$-cyclic. For the $SU(2)$ case, the only possible exception is when $q=1$ and infinitely many even $p$.

In particular, any nontrivial knot admits only finitely many integer $SO(3)$-cyclic surgeries and only finite many odd $SU(2)$-cyclic surgeries.
\end{cor}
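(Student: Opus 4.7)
The plan is to fix $q$ and apply Theorem \ref{main theorem} to any pair of cyclic surgeries with that denominator, converting the distance bound into a linear inequality on the numerators that forces boundedness.

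First I would reduce to $q\ge 1$: replacing $(p,q)$ by $(-p,-q)$ lets us assume $q\ge 0$, and $q=0$ forces $p=\pm 1$, contributing nothing. Next I would invoke Corollary \ref{cor2}: among surgeries $p/q$ with common denominator $q$, the $SO(3)$-cyclic ones all have numerators of the same sign, and likewise the $SU(2)$-cyclic ones except when they are all even integers (which is precisely the exception in the statement). So, after a sign flip, I may assume all admissible numerators are positive.

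Given two such surgeries $r_i=p_i/q$ with $0<p_1<p_2$, the distance equals $q(p_2-p_1)$, and Theorem \ref{main theorem} yields
$$2q(p_2-p_1)\le p_1+p_2 \qquad\text{($SO(3)$ case)},$$
which rearranges to $p_2\le \frac{2q+1}{2q-1}p_1$; fixing the smallest admissible $p_1$ bounds all other $p_2$, so only finitely many such $p$ exist. For the $SU(2)$ case with $q\ge 2$ the same argument with $q(p_2-p_1)\le p_1+p_2$ gives $p_2\le\frac{q+1}{q-1}p_1$, again finitely many. The case $q=1$ in the $SU(2)$ statement is where the inequality degenerates (it becomes $p_2-p_1\le p_1+p_2$, which is vacuous), accounting for the stated exception of infinitely many even integers. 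The ``in particular'' sentence follows at once: integer $SO(3)$-cyclic surgeries are the $q=1$ $SO(3)$ subcase, while odd integer $SU(2)$-cyclic surgeries are $SO(3)$-cyclic by Lemma \ref{homomorphism lift} and thus reduce to the same bound.

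The only real obstacle is bookkeeping: correctly isolating the exceptional $q=1$, $SU(2)$-even subcase and using Corollary \ref{cor2} to force a common sign on the numerators before applying the main theorem. Once those reductions are in place, the conclusion is pure algebra applied to Theorem \ref{main theorem} and Lemma \ref{homomorphism lift}.
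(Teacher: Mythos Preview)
Your argument is correct and is essentially the natural elaboration of what the paper has in mind: the paper itself gives no detailed proof of this corollary, stating only that Corollaries \ref{cor1}, \ref{cor2}, and \ref{cor4} ``are easy to prove using the main theorem,'' and your route---reduce to a common sign via Corollary \ref{cor2}, then apply the inequality of Theorem \ref{main theorem} to a pair $p_1/q,\,p_2/q$ to bound the larger numerator linearly by the smaller, invoking Lemma \ref{homomorphism lift} for the odd-integer $SU(2)$ case---is exactly the intended computation. One small presentational point: the sentence ``accounting for the stated exception of infinitely many even integers'' at $q=1$ only becomes fully justified once you note (as you do in the final paragraph) that the odd integers among them are $SO(3)$-cyclic by Lemma \ref{homomorphism lift} and hence finite in number; you might move that observation up so the $q=1$ case of the main statement is closed before turning to the ``in particular'' clause.
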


The paper is organized as follows: in section 2, we review some preliminaries and basic constructions related to holonomy perturbations. In section 3, we prove the main theorem and the corollaries.

\bigskip\noindent\textbf{Acknowledgement} The author wishes to thank Nathan Dunfield, Cameron Gordon, Yi Ni and Yi Liu for valuable discussions and comments. The author is especially grateful to Ciprian Manolescu for inspiring conversations and helpful suggestions in writing this paper.

\section{Preliminaries}

In this section, we review the basic facts about holonomy perturbations. Most details can be found in \cite{KM} and \cite{FLOER}. The constructions are very similar to \cite{Jianfeng}, but for completeness, we review them again here.

Consider the closed manifold $K(0)$. We have $b_{1}(K(0))=1$. Let $E$ be the rank 2 unitary bundle over $K(0)$ with $c_{1}(E)$ the Poincar\'{e} dual of the meridian $m$ of $K$. Let $\mathfrak{g}_{E}$ be the bundle whose sections are traceless, skew-hermitian endomorphisms of $E$. Let $\mathcal{A}$ be the affine space of $SO(3)$ connections of $\mathfrak{g}_{E}$. Let $\mathcal{G}$ be the group of gauge transformations on $E$ with determinant 1. Notice that $\mathcal{G}$ is slightly smaller than the $SO(3)$-gauge transformation group of $\mathfrak{g}_{E}$.

Fix a reference connection $A_{0}$ on $\mathfrak{g}_{E}$. Then for any connection $A$ on $\mathfrak{g}_{E}$, $A-A_{0}$ can be identified with $\omega\in \Omega^{1}(\mathfrak{g}_{E})$. We have the Chern-Simons functional:
$$CS:\mathcal{A}\rightarrow \mathds{R}$$
$$CS(A)=\frac{1}{4}\int_{X_{0}} Tr(2\omega\wedge F_{A_{0}}+\omega\wedge d\omega+\frac{2}{3}\omega\wedge\omega\wedge\omega)$$
Here $F_{A_{0}}$ is the curvature of $A_{0}$.

The critical points of the Chern-Simons functional are the flat connections.

Floer introduced the holonomy perturbations as follows. Take a function $\phi: SU(2)\rightarrow\mathds{R}$ which is invariant under conjugation. Then it is uniquely determined by the even, $2\pi-$periodic function:
 \begin{equation}\label{diag}
 f(x) :=
\phi\left(\begin{array} {lr}
 e^{ix} & 0 \\
 0&e^{-ix}
\end{array}\right)\end{equation}
 Let $D$ be a compact 2-manifold with boundary. Consider an embedding  $D\times S^{1}$ in $K(0)$ such that $\mathfrak{g}_{E}$ is trivial over it. Fix a trivialization of $\mathfrak{g}_{E}$ over $D\times S^{1}$ and take a 2-form $\mu$ which is supported in the interior of $D$ with integral 1. Using the trivialization, we can lift $A$ to a connection $\bar{A}$ on the trivialized $SU(2)$-bundle $\widetilde{P}$ over $D\times S^{1}$. We consider the functional:
$$\Phi : \mathcal{A}\rightarrow\mathds{R}$$
\begin{equation}\label{holonomy perturbation} \Phi(A):=\int_{p\in D}\phi(\text{Hol}_{\{p\}\times S^{1}}(\bar{A}))\mu(p)\end{equation}
Here $\text{Hol}_{\{p\}\times S^{1}}$ is the holonomy along $\{p\}\times S^{1}$.

We decompose $K(0)$ into three parts: $(S^{3}-N(K))\mathop{\cup}\limits_{\{0\}\times l\times m}([0,1]\times l\times m) \mathop{\cup}\limits_{\{1\}\times l\times m}  (D^{2}\times m)$. We have meridians and longitudes on both side of the thicken torus. Denote them by $m_{0}$, $l_{0}$, $m_{1}$, $l_{1}$ respectively.

We should be careful that $m_{0}$ is the meridian of the knot complement but $m_{1}$ the longitude of the attached solid torus. Also $l_{0}$ is the longitude of the knot complement but the $l_{1}$ is the meridian of the attached solid torus.

For our purpose, we will do two types of perturbations:
\begin{itemize}
\item Set $D\cong D^{2}$ and $i_{1}(D^{2}\times S^{1})= (D^{2}\times m)\subset K(0)$. That means we use the holonomy along $m$ to do the perturbation. We denote this perturbation by $\Phi_{1}$
\item Set $D\cong m\times[0,1]$ ($D$ is an annulus ) and $i_{2}(D\times S^{1})=(m\times [0,1])\times l\subset K(0)$. That means we embed a thickened torus and use the holonomy along $l$ to do the perturbation. We denote this perturbation by $\Phi_{2}$.
\end{itemize}

We choose a trivialization of $\mathfrak{g}_{E}$ over $(D^{2}\times m)\cup (m\times [0,1]\times l)$ and use it to  lift the connection $A$ to a $SU(2)$-connection $\overline{A}$ on $\widetilde{P}$. Now use Formula (\ref{holonomy perturbation}) and consider the perturbed Chern-Simons functional $\widehat{CS}=CS+\Phi_{1}+\Phi_{2}: \mathcal{A}\rightarrow \mathds{R}$.

The following theorem was first proved in \cite{KM3}:
\begin{thm}[Kronheimer, Mrowka \cite{KM3}] \label{existence of critical points} If $K$ is a nontrivial knot, then for any holonomy perturbation, the perturbed Chern-Simons functional $\widehat{CS}$ over $K(0)$ always has at least one critical point.
\end{thm}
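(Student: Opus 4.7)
The plan is to translate the existence of a critical point of $\widehat{CS}$ into a non-vanishing statement for the instanton Floer homology of $K(0)$ with the admissible bundle $E$, and then to invoke the deep non-vanishing theorem arising from the Property-P machinery.

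First I would observe that because $c_{1}(E)$ is Poincar\'e dual to the meridian $m$, which has odd pairing with a closed surface in $K(0)$ dual to $m$, the pair $(K(0), E)$ is admissible in the sense of Donaldson-Floer theory. Consequently, after a generic choice of holonomy perturbation of the form $\Phi_{1}+\Phi_{2}$ described above, the perturbed flat connections --- i.e.\ the critical points of $\widehat{CS}$ modulo $\mathcal{G}$ --- are non-degenerate and form the generating set of the instanton Floer chain complex $CI_{*}(K(0); E)$. In particular, an empty critical set would force the associated Floer homology $I_{*}(K(0); E)$ to vanish.

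Next I would invoke the standard invariance property of instanton Floer homology: any two admissible holonomy perturbations yield chain-homotopy equivalent complexes, so $I_{*}(K(0); E)$ is a genuine topological invariant of the pair $(K(0), E)$. Thus if some particular choice of perturbation produced no critical point, the Floer homology would vanish identically, independent of the perturbation. This reduces the theorem to showing $I_{*}(K(0); E)\neq 0$ whenever $K$ is a nontrivial knot.

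The non-vanishing is the main technical result of Kronheimer and Mrowka in \cite{KM3}. The plan I would follow combines: (i) Gabai's theorem providing a taut foliation on $K(0)$ for any nontrivial $K$; (ii) the Eliashberg-Thurston perturbation converting this foliation into a weakly semi-fillable contact structure; (iii) the construction of a near-symplectic form on a suitable cobordism having $K(0)$ as one of its ends; and (iv) a relative Donaldson-type argument showing that the instanton moduli space on this near-symplectic cobordism produces a nontrivial element of $I_{*}(K(0); E)$. The main obstacle is step (iv): controlling the instanton moduli spaces on a non-compact four-manifold with cylindrical ends and proving that the resulting relative invariant pairs nontrivially with the Floer homology. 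This is the heart of the argument, and the only place where the hypothesis ``$K$ is nontrivial'' is used in an essential way.
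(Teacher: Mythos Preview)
The paper does not actually prove this theorem: it is quoted from \cite{KM3}, and the only argument offered is the remark immediately following the statement, which lists the ingredients (Gabai, Eliashberg--Thurston, Taubes, Feehan--Leness, Kronheimer--Mrowka). Your reduction --- no critical points $\Rightarrow$ zero Floer chain complex $\Rightarrow$ $I_{*}(K(0);E)=0$, contradicting the non-vanishing theorem --- is the correct way to read the statement and agrees with the paper's intent.

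However, your sketch of the non-vanishing part has a genuine gap. In step (iv) you describe ``a relative Donaldson-type argument showing that the instanton moduli space on this near-symplectic cobordism produces a nontrivial element of $I_{*}(K(0);E)$.'' This is not how the argument in \cite{KM3} runs, and it elides the step that carries essentially all of the analytic weight. After (i)--(iii) one has $K(0)$ separating a \emph{closed symplectic} (not near-symplectic) $4$-manifold $X$ with $b_{2}^{+}>1$. One then needs to know that the Donaldson invariants of $X$ are nontrivial; only then does a neck-stretching argument force $I_{*}(K(0);E)\neq 0$. There is no direct instanton argument producing this non-triviality. Instead, the route is: Taubes \cite{Taubes} gives non-vanishing of the \emph{Seiberg--Witten} invariants of $X$, and then Feehan--Leness \cite{FL} (Witten's conjecture, in the form needed) transports this to the Donaldson side. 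Both of these are listed explicitly in the paper's remark and are absent from your outline. Without them, step (iv) as written is an assertion rather than an argument: there is no known way to get non-vanishing Donaldson invariants directly out of a taut foliation or a contact filling.
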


\begin{rmk}
The proof of this theorem is highly nontrivial. It combines Gabai's result about taut foliation in \cite{Gabai}, Eliashberg-Thurston's theorem about symplectic filling in \cite{Eliashberg} and \cite{Eliashberg-Thurston}, Taubes's result about the Seiberg-Witten invariants of the symplectic four-manifold in \cite{Taubes}, Feehan and Leness's work about Witten's conjecture in \cite{FL} and Kronheimer-Mrowka's work about the refinement of Eliashberg-Thurston's theorem in \cite{KM3}.
\end{rmk}

The critical points can be completed determined:

\begin{lem}\label{critical point}
If $A\in\mathcal{A}$ is a critical point of $\widehat{CS}$, then:

\begin{itemize}
\item $A$ is flat on $S^{3}-N(K)\subset K(0)$.
\item We can choose suitable trivialization of the $SU(2)$-bundle $\widetilde{P}$ such that:
\\ $Hol_{m_{0}}(\overline{A})=\left(\begin{smallmatrix}
 e^{i\theta_{0}} & 0 \\
 0&e^{-i\theta_{0}}
\end{smallmatrix}\right),Hol_{m_{1}}(\overline{A})=\left(\begin{smallmatrix}
 e^{i\theta_{1}} & 0 \\
 0&e^{-i\theta_{1}}
\end{smallmatrix}\right), Hol_{l_{0}}(\overline{A})=\left(\begin{smallmatrix}
 e^{i\eta_{0}} & 0 \\
 0&e^{-i\eta_{0}}
\end{smallmatrix}\right)$
\\and $Hol_{l_{1}}(\overline{A})=\left(\begin{smallmatrix}
 e^{i\eta_{1}} & 0 \\
 0&e^{-i\eta_{1}}
\end{smallmatrix}\right)$.
\item $\eta_{0}=\eta_ {1}=-f_{2}'(\theta_{1})+2\mathds{Z}\pi$ and  $\theta_{0}-\theta_{1}=-f_{1}'(\eta_{0})+2\mathds{Z}\pi$.

\end{itemize}
\end{lem}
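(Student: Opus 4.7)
The plan is to derive each bullet from the first-variation equation of $\widehat{CS}$ and the explicit structure of the holonomy perturbations. For the first bullet, I write the Euler--Lagrange equation of $\widehat{CS} = CS + \Phi_{1} + \Phi_{2}$ in curvature form: the variation of $CS$ in a direction $a \in \Omega^{1}(\mathfrak{g}_{E})$ is the standard $\int_{K(0)} \mathrm{Tr}(F_{A} \wedge a)$, while each $\Phi_{i}$ contributes $\int \mathrm{Tr}(V_{i} \wedge a)$ for some $\mathfrak{g}_{E}$-valued 2-form $V_{i}$ supported inside the embedded image $i_{i}(D \times S^{1})$. The critical point condition is therefore $F_{A} + V_{1} + V_{2} = 0$, and because $S^{3} - N(K)$ is disjoint from both perturbation supports, this reduces to $F_{A} = 0$ there, giving flatness on the knot complement.

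For the second bullet, flatness on the knot complement yields a representation $\pi_{1}(S^{3}-N(K)) \to SU(2)$ whose restriction to the peripheral $\mathbb{Z}^{2} = \langle m_{0}, l_{0} \rangle$ sends both generators to commuting elements of $SU(2)$. Commuting pairs lie in a common maximal torus, so after a global $SU(2)$-gauge transformation I may assume $\mathrm{Hol}_{m_{0}}(\bar A)$ and $\mathrm{Hol}_{l_{0}}(\bar A)$ are diagonal. I then extend this diagonalizing trivialization across the thickened torus $[0,1] \times T^{2}$ and the solid torus $D^{2} \times m$: once the holonomy entering the perturbation formula is diagonal, $V_{i}$ takes values in the Cartan direction spanned by $\mathrm{diag}(i,-i)$, so $F_{A} = -V_{i}$ becomes an abelian problem in each region, which can be solved while keeping $A$ diagonal. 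Consequently $\mathrm{Hol}_{m_{1}}(\bar A)$ and $\mathrm{Hol}_{l_{1}}(\bar A)$ are also diagonal in the extended trivialization.

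For the third bullet I compute $V_{1}$ and $V_{2}$ explicitly using the standard formula for the derivative of a holonomy under a variation of connection. In the diagonal gauge one obtains $V_{i}$ as a multiple of $\mathrm{diag}(i,-i)$ times the 2-form $\mu_{i}$ pulled back from the disk $D_{i}$, where the scalar factor is $f_{i}'$ evaluated at the relevant holonomy angle, up to signs and $2\pi$ factors. Integrating the abelianized equation $F_{A} = -V_{i}$ over appropriate 2-cycles and applying Stokes then extracts the stated relations: integration over the slab $[0,1] \times m$ in the thickened torus computes $\mathrm{Hol}_{m_{0}}(\bar A) \cdot \mathrm{Hol}_{m_{1}}(\bar A)^{-1}$ as the exponential of the perturbation integral, giving the equation for $\theta_{0} - \theta_{1}$; integration over the meridian disk of $D^{2} \times m$ computes $\mathrm{Hol}_{l_{1}}(\bar A)$ and gives the equation for $\eta_{1}$; and the vanishing of any $dl$-component in $V_{2}$ forces $\eta_{0} = \eta_{1}$. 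The main technical hurdle is bookkeeping: one must identify the gradient of each holonomy perturbation precisely (which involves parallel transport along the $S^{1}$-direction of the perturbation region), manage signs and $2\pi$ factors in the Stokes computation, and verify that the diagonalizing trivialization chosen on the knot complement extends consistently through both perturbation regions so that the angles $\theta_{i}, \eta_{i}$ are well-defined modulo $2\pi$.
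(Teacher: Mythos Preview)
Your proposal is essentially correct and is a more self-contained version of what the paper does. The paper's own proof is much terser: it cites Lemma~4 of Braam--Donaldson and Lemma~2.2 of the author's earlier paper for the flatness statement and for the holonomy relations coming from each perturbation separately. What remains in the paper is a short reconciliation argument: one trivialization of $\widetilde P$ diagonalizes all four holonomies and yields $\theta_0-\theta_1=-f_1'(\eta_0)$, while a \emph{second} trivialization diagonalizes $\mathrm{Hol}_{m_1},\mathrm{Hol}_{l_1}$ with angles $(\theta_1',\eta_1')$ and yields $\eta_1'=-f_2'(\theta_1')$. The two trivializations differ by conjugation, so $(\theta_1',\eta_1')=\pm(\theta_1,\eta_1)$, and since $f_2'$ is odd one concludes $\eta_1=-f_2'(\theta_1)$.

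You instead propose to work in a \emph{single} trivialization extended from the knot complement through the thickened torus and the solid torus, arguing that once the boundary holonomies are diagonal the perturbation 2-forms $V_i$ are Cartan-valued and the problem abelianizes. This avoids the two-trivialization reconciliation step, at the cost of having to justify that the diagonalizing gauge really extends across both perturbation regions; you flag this as the main hurdle, and it is one. Concretely, you need that the $d\theta_l$-components of $F_A$ vanish in the thickened torus (which follows since $\mu_2$ lives on $m\times[0,1]$), so that in temporal gauge $A_l$ is $s$-independent and stays diagonal; only then is $V_2$ guaranteed Cartan-valued everywhere, not just at $s=0$. Once that is in place, your Stokes integrations over $[0,1]\times m$ and over the meridian disk $D^2$ give the two relations exactly as you describe. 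Both approaches are valid; yours unpacks the cited references and is more explicit, while the paper's is shorter but leans on the oddness of $f_2'$ to absorb the sign ambiguity between the two trivializations.
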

\begin{rmk}
Recall that we chose $\phi_{i}:SU(2)\rightarrow \mathds{R}$ to define the perturbation $\Phi_{i}$ ($i=1,2$), which gives us $f_{i}:\mathds{R}\rightarrow \mathds{R}$ by formula (\ref{diag}).
\end{rmk}
\begin{proof} [Proof of Lemma \ref{critical point}] By Lemma 4 in \cite{FLOER} and Lemma 2.2 in \cite{Jianfeng}, $A$ is flat on $S^{3}-N(K)\subset K(0)$ and near $(m\times l\times \{0\})\cup (m\times l\times \{1\})$. Moreover, we can choose a suitable trivialization of $\widetilde{P}$ such that $Hol_{m_{0}}(\overline{A})=\left(\begin{smallmatrix}
 e^{i\theta_{0}} & 0 \\
 0&e^{-i\theta_{0}}
\end{smallmatrix}\right),Hol_{m_{1}}(\overline{A})=\left(\begin{smallmatrix}
 e^{i\theta_{1}} & 0 \\
 0&e^{-i\theta_{1}}
\end{smallmatrix}\right), Hol_{l_{0}}(\overline{A})=\left(\begin{smallmatrix}
 e^{i\eta_{0}} & 0 \\
 0&e^{-i\eta_{0}}
\end{smallmatrix}\right), Hol_{l_{1}}(\overline{A})=\left(\begin{smallmatrix}
 e^{i\eta_{1}} & 0 \\
 0&e^{-i\eta_{1}}
\end{smallmatrix}\right)$ and $\theta_{0}-\theta_{1}=-f_{1}'(\eta_{0})+2\mathds{Z}\pi$. Also, we can choose another trivialization of  $\widetilde{P}$ such that $Hol_{m_{1}}(\overline{A})=\left(\begin{smallmatrix}
 e^{i\theta_{1}'} & 0 \\
 0&e^{-i\theta_{1}'}
\end{smallmatrix}\right), Hol_{l_{1}}(\overline{A})=\left(\begin{smallmatrix}
 e^{i\eta_{1}'} & 0 \\
 0&e^{-i\eta_{1}'}
\end{smallmatrix}\right)$ and $\eta_ {1}'=-f_{2}'(\theta_{1}')+2\mathds{Z}\pi$. Since different trivialzations give the same holonomy modulo conjugation. We have $(\theta_{1}',\eta_{1}')=\pm(\theta_{1},\eta_{1})$. Since $f_{2}'$ is an odd function, we have $\eta_ {1}=-f_{2}'(\theta_{1})+2\mathds{Z}\pi$.\end{proof}

Now suppose $A$ is a critical point.
Since $\mathfrak{g}_{E}$ is trivial over $\pi_{1}(S^{3}-N(K))$, we fix a trivialization of $\mathfrak{g}_{E}|_{S^{3}-N(K)}$. Using this trivialization, we lift the connection $A$ to a $SU(2)$-connection $\widetilde{A}$ over $S^{3}-N(K)$.

  \begin{rmk}This trivialization of $\mathfrak{g}_{E}|_{\pi_{1}(S^{3}-N(K))}$ does not agree with the trivialization over $(D^{2}\times m)\cup (m\times [0,1]\times l)$ (the trivialization which we chose to define the holonomy perturbations) on the torus boundary. They differ by a map $f:\partial(S^{3}-N(K))\rightarrow SO(3)$ such that \begin{equation}\label{gluing}
f_{*}(m)=1\in \pi_{1}(SO(3)), f_{*}(l)=-1\in \pi_{1}(SO(3))\end{equation}
\end{rmk}

By taking the holonomy of $\widetilde{A}$, we get a representation $\rho:\pi_{1}(S^{3}-N(K))\rightarrow SU(2)$.

\begin{defi}
We define a subset $R_{K}$ of $(\mathds{R}/2\pi \mathds{Z})\bigoplus(\mathds{R}/2\pi \mathds{Z})$ as follows:
$$
\{(\theta,\eta)|\exists \rho:\pi_{1}(S^{3}-N(K))\rightarrow SU(2) \text{ s.t. }\rho(m)=\left(\begin{smallmatrix}
 e^{i\theta} & 0 \\
 0&e^{-i\theta}
\end{smallmatrix}\right); \rho(l)=\left(\begin{smallmatrix}
 e^{i\eta} & 0 \\
 0&e^{-i\eta}
\end{smallmatrix}\right) \}
$$

\end{defi}

We can also describe $R_{K}$ as:
$$
\{(\theta,\eta)|\exists \text{ flat } \text{connection } \widetilde{A} \text{ over } S^{3}-N(K)\text{ s.t.
Hol}_{m}(\widetilde{A})=\left(\begin{smallmatrix}
 e^{i\theta} & 0 \\
 0&e^{-i\theta}
\end{smallmatrix}\right); \text{Hol}_{l}(\widetilde{A})=\left(\begin{smallmatrix}
 e^{i\eta} & 0 \\
 0&e^{-i\eta}
\end{smallmatrix}\right) \}
$$

\begin{nota}
Let $S\subset (\mathds{R}/2\pi \mathds{Z})\bigoplus(\mathds{R}/2\pi \mathds{Z})$ be a subset. If $h$ is a function with period $2\pi$, we denote the set $\{(\theta,\eta+h(\theta))| (\theta,\eta)\in S \}$ by $S+(*,h)$ and the set $\{(\theta+h(\eta),\eta)| (\theta,\eta)\in S \}$ by $S+(h,*)$.
We also denote the set $\{(\theta+a,\eta+b)| (\theta,\eta)\in S \}$ by $S+(a,b)$ for constant $a,b$.

\end{nota}

The following lemma is proved in \cite{KM}. We change the statement a little. For completeness, we give the proof here.

\begin{lem}\label{property of pillowcase}
$R_{K}$ has the following properties:
\begin{itemize}
\item 1)Any point in $R_{K}$ off the line $\{\eta=2\pi\mathds{Z}\}$ gives some non-cyclic representation.
\item 2)$R_{K}$ is a closed subset of $(\mathds{R}/2\pi \mathds{Z})\bigoplus(\mathds{R}/2\pi \mathds{Z})$.
\item 3)$R_{K}=R_{K}+(\pi,0)$.
\item 4)$R_{K}\cap\{\theta=k\pi\}=(k\pi,2k'\pi), (k,k'\in \mathds{Z}). $
\item 5)$\exists \epsilon>0$ such that $\forall k\in \mathds{Z},R_{K}\cap \{\theta\in [k\pi-\epsilon,k\pi+\epsilon]\}\cap \{\eta\neq 2\mathds{Z}\pi\}=\emptyset.$\end{itemize}
\end{lem}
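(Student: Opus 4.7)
The plan is to verify the five properties in order, with (1)--(4) following from standard character-variety facts and (5) being the substantive assertion.

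For (1), the longitude $l$ is null-homologous in the knot complement, so $l\in[\pi_{1}(S^{3}-N(K)),\pi_{1}(S^{3}-N(K))]$. Any cyclic $SU(2)$-representation is abelian and therefore sends $l$ to $I$, forcing $\eta\in 2\pi\mathds{Z}$; contrapositively, $\eta\notin 2\pi\mathds{Z}$ forbids cyclic image. For (2), $\mathrm{Hom}(\pi_{1}(S^{3}-N(K)),SU(2))$ is closed in $SU(2)^{n}$ (using any finite presentation) and hence compact; since $m,l$ commute in the peripheral subgroup, $\rho(m)$ and $\rho(l)$ commute in $SU(2)$ and may be simultaneously diagonalized, so the continuous assignment $\rho\mapsto(\theta,\eta)$, well defined modulo the simultaneous sign change $(\theta,\eta)\mapsto(-\theta,-\eta)$, has compact image and $R_{K}$ is closed. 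For (3), compose $\pi_{1}\to H_{1}\cong\mathds{Z}\to\mathds{Z}/2$ to obtain a character $\varepsilon$ with $\varepsilon(m)=-1$ and $\varepsilon(l)=+1$; then $\rho':=\varepsilon\cdot\rho$ is an $SU(2)$-representation realizing $(\theta+\pi,\eta)$ whenever $\rho$ realizes $(\theta,\eta)$, so $R_{K}=R_{K}+(\pi,0)$.

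For (4), by the Wirtinger presentation $\pi_{1}(S^{3}-N(K))$ is normally generated by $m$; when $\theta=k\pi$ the element $\rho(m)=\pm I$ is central in $SU(2)$, so every conjugate of $\rho(m)$ equals $\rho(m)$ itself, forcing the entire image into $\{\pm I\}$. This image is abelian, so $\rho(l)=I$ and $\eta=0$ in $\mathds{R}/2\pi\mathds{Z}$.

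The main obstacle is (5). I would argue by contradiction: if no uniform $\epsilon$ existed, a diagonal extraction from (2) would supply a sequence of representations $\rho_{n}$ with $\theta_{n}\to k\pi$ and $\eta_{n}\not\equiv 0\pmod{2\pi}$, converging along a subsequence to some $\rho_{\infty}$ which must be central by (4). The local structure of the $SU(2)$-character variety near a central representation of a knot complement is governed by the Alexander polynomial: a non-abelian branch can meet the abelian line $\{\eta=0\}$ at a character with $\rho(m)=\mathrm{diag}(\alpha,\alpha^{-1})$ only if $\alpha^{2}$ is a root of $\Delta_{K}$. Since $\Delta_{K}(1)=\pm 1\neq 0$ for any knot, both $\theta=0$ and $\theta=\pi$ (where $\alpha^{2}=1$) are excluded, contradicting the existence of such a sequence and yielding the required gap. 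This deformation-theoretic input, rather than any of the soft steps above, is the essential ingredient of the lemma.
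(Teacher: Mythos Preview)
Your arguments for (1)--(4) are correct and essentially identical to the paper's. The paper phrases (4) slightly differently---at $\theta=0$ it observes that $\rho(m)=1$ forces $\rho$ to factor through $\pi_{1}(S^{3})=\{1\}$, then transports to $\theta=\pi$ via (3)---but this is the same content as your Wirtinger/normal-generation argument.

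For (5) the two proofs diverge. You invoke the Alexander-polynomial criterion (the de~Rham--Burde/CCGLS statement that a reducible character lying on a component with irreducibles must have $\Delta_{K}(\alpha^{2})=0$), together with $\Delta_{K}(1)=\pm 1$, to rule out irreducibles limiting to $\theta\in\pi\mathds{Z}$. The paper instead gives a direct first-cohomology computation at the trivial representation: the Zariski tangent space to the representation variety there is $H^{1}(\pi_{1}(S^{3}-K);\mathds{R}^{3})\cong\mathds{R}^{3}$, and since the abelian locus $\mathrm{Hom}(\mathds{Z},SU(2))\cong SU(2)$ is already smooth of dimension $3$ at the trivial representation, it exhausts a neighbourhood; hence all nearby representations are reducible and have $\eta=0$. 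Compactness together with (4) then upgrades this to the uniform $\epsilon$-strip, and (3) transports to $k=1$. The paper's route is more elementary and self-contained (it is really the $\alpha^{2}=1$ instance of the computation underlying the criterion you quote); your route imports a stronger result, but it is exactly the ingredient the paper itself calls on later in Section~3.2, so it fits naturally into the overall argument.
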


\begin{proof}

1) Any point in $R_{K}$ gives a representation $\rho:\pi_{1}(S^{3}-N(K))\rightarrow SU(2)$. If $\rho$ is cyclic, then $\rho$ factors through $H_{1}(S^{3}-N(K))$, which implies that $\rho(l)=1\in SU(2)$.

2) $R_{K}$ is closed because $\pi_{1}(S^{3}-K)$ is finitely generated and $SU(2)$ is compact.

3) We have a map $\rho_{0}:\pi_{1}(S^{3}-K)\rightarrow H_{1}(S^{3}-K)\rightarrow \mathds{Z}_{2}\subset SU(2)$ such that $\rho_{0}(m)=-1\in SU(2)$ and $ \rho_{1}(l)=1\in SU(2)$. For any homomorphism $\rho:\pi_{1}(S^{3}-N(K))\rightarrow SU(2)$, we can multiply it by $\rho_{0}$ to get another representation $\rho'$ such that $\rho'(l)=\rho(l)$ and $\rho'(m)=-\rho(m)$. By definition of $R_{K}$, we get $R_{K}=R_{K}+(\pi,0)$.

4) Suppose $\rho$ is given by a point with $\theta=0$, then $\rho(m)=1\in SU(2)$ and $\rho$ factors through $\pi_{1}(S^{3})$, which is a trivial. We get $\rho(l)=1$ and $\eta=2k'\pi$. For the case $\theta=\pi$, we use 3).

5) Look at a small neighborhood $U$ of $(0,0)\in R_{K}$ in $(\mathds{R}/2\pi \mathds{Z})\bigoplus(\mathds{R}/2\pi \mathds{Z})$. The point $(0,0)$ is given by the restriction of the trivial representation $\rho_{1}$. The deformations of $\rho_{1}$ are governed by $H^{1}(\pi_{1}(S^{3}-K),\mathds{R}^{3})\cong \mathds{R}^{3}$. But every vector in this $\mathds{R}^{3}$ can be realized by the some reducible representation. We see that in a small neighborhood of $\rho_{1}$, all the representations are reducible. Thus $U\cap R_{K}\cap \{\eta\neq 2\mathds{Z}\pi\}=\emptyset$ if $U$ is small enough. Use 4) and the compactness of $R_{K}$, we prove 5) for the case $k=0$. Then we use 3) to prove the case $k=1$.

\end{proof}

\begin{lem}
If $A$ is a critical point of the perturbed Chern-Simons functional, then $(\theta_{0},\eta_{0})\in R_{K}+(0,-\pi)$.
($\theta_{0}$ and $\eta_{0}$ are defined in Lemma $\ref{critical point}$)
\end{lem}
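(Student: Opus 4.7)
The plan is to restrict $A$ to the knot complement $S^{3}-N(K)$, where Lemma~\ref{critical point} guarantees flatness, and then compare the two $SU(2)$-lifts arising from the two different trivializations of $\mathfrak{g}_{E}$ on the boundary torus. Using the trivialization of $\mathfrak{g}_{E}|_{S^{3}-N(K)}$ chosen above, we lift $A|_{S^{3}-N(K)}$ to an $SU(2)$-connection $\widetilde{A}$ on the trivial $SU(2)$-bundle. Since $\widetilde{A}$ is flat, its holonomy defines a representation $\rho:\pi_{1}(S^{3}-N(K))\rightarrow SU(2)$. After simultaneously diagonalizing the commuting pair $\rho(m),\rho(l)$ as $\mathrm{diag}(e^{i\theta},e^{-i\theta})$ and $\mathrm{diag}(e^{i\eta},e^{-i\eta})$, we have $(\theta,\eta)\in R_{K}$ by definition, and it remains to show that $(\theta,\eta)=(\theta_{0},\eta_{0}+\pi)$.

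To compare the two lifts, observe that on $\partial(S^{3}-N(K))$ the trivialization of $\mathfrak{g}_{E}$ over $S^{3}-N(K)$ and the trivialization over $(D^{2}\times m)\cup(m\times[0,1]\times l)$ differ by a gauge transformation $f:T^{2}\rightarrow SO(3)$ satisfying (\ref{gluing}), namely $f_{*}(m)=1$ and $f_{*}(l)=-1$ in $\pi_{1}(SO(3))\cong\mathds{Z}/2$. The general principle (which will be the main point to verify carefully) is that if the trivialization of an $SO(3)$-bundle is changed by $f$, then the $SU(2)$-holonomy of a connection around a loop $\gamma$ is merely conjugated when $f\circ\gamma$ lifts to a loop in $SU(2)$, but is conjugated and then multiplied by $-1$ when $f\circ\gamma$ lifts only to a path from $1$ to $-1$ in $SU(2)$. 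Applying this separately to $m$ (no sign flip, since $f_{*}(m)=1$) and to $l$ (sign flip, since $f_{*}(l)=-1$) yields
\[
\rho(m)\ \sim\ \mathrm{Hol}_{m_{0}}(\overline{A}), \qquad \rho(l)\ \sim\ -\mathrm{Hol}_{l_{0}}(\overline{A}),
\]
where $\sim$ denotes simultaneous $SU(2)$-conjugation.

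Because $-\mathrm{diag}(e^{i\eta_{0}},e^{-i\eta_{0}})=\mathrm{diag}(e^{i(\eta_{0}+\pi)},e^{-i(\eta_{0}+\pi)})$, after choosing the simultaneous diagonalization we obtain $\theta=\theta_{0}$ and $\eta=\eta_{0}+\pi$, where the residual sign ambiguity $(\theta,\eta)\leftrightarrow(-\theta,-\eta)$ is immaterial since $R_{K}$ contains both representatives. Thus $(\theta_{0},\eta_{0}+\pi)\in R_{K}$, which is exactly $(\theta_{0},\eta_{0})\in R_{K}+(0,-\pi)$. The main obstacle is the bookkeeping of the sign flip on the $l$-holonomy coming from the nontriviality of $f|_{l}$ in $\pi_{1}(SO(3))$; this is essentially a $w_{2}$-computation encoded by (\ref{gluing}) and is precisely where the shift by $-\pi$ in the statement originates.
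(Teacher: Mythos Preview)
Your proof is correct and follows essentially the same route as the paper's. Both arguments hinge on the observation that the two $SU(2)$-lifts $\widetilde{A}$ and $\overline{A}$ on the boundary torus differ by the $SO(3)$ gauge transformation $f$ of formula~(\ref{gluing}), so that the holonomy pair $(\mathrm{Hol}_{m_{0}}(\widetilde{A}),\mathrm{Hol}_{l_{0}}(\widetilde{A}))$ is simultaneously conjugate to $(\mathrm{Hol}_{m_{0}}(\overline{A}),-\mathrm{Hol}_{l_{0}}(\overline{A}))$; you spell out in more detail than the paper why the nontriviality of $f_{*}(l)\in\pi_{1}(SO(3))$ forces the sign on the $l$-holonomy, which is exactly the source of the $(0,-\pi)$ shift.
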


\begin{proof}
On the torus $m\times l\times \{0\}$, we use two different trivializations to lift $A$ to two $SU(2)$ connections $\overline{A}$ and $\widetilde{A}$. Because of formula (\ref{gluing}), we see that   $(\text{Hol}_{m_{0}}(\widetilde{A}),\text{Hol}_{l_{0}}(\widetilde{A}))\in SU(2)\times SU(2)$ is conjugate with  $(\text{Hol}_{m_{0}}(\overline{A}),-\text{Hol}_{l_{0}}(\overline{A}))$. So we can change the trivialization of $\mathfrak{g}_{E}$ over $S^{3}-N(K)$ so that $(\text{Hol}_{m_{0}}(\overline{A}),-\text{Hol}_{l_{0}}(\overline{A}))=(\text{Hol}_{m_{0}}(\widetilde{A}),\text{Hol}_{l_{0}}(\widetilde{A}))$.   Then we use the second description of $R_{K}$.\end{proof}

Combining this lemma and Lemma \ref{critical point}, we get:
\begin{lem}\label{boundary value}
If $A$ is a critical point of the perturbed Chern-Simons functional, then $(\theta_{1},\eta_{1})\in (R_{K}+(0,-\pi)+(f_{1}',*))\cap\{\eta=-f_{2}'(\theta)\}$.
\end{lem}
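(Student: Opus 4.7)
The plan is to simply combine the two lemmas already stated: the previous lemma locates $(\theta_{0},\eta_{0})$ in $R_{K}+(0,-\pi)$, and Lemma \ref{critical point} relates $(\theta_{0},\eta_{0})$ to $(\theta_{1},\eta_{1})$ through the derivatives $f_{1}', f_{2}'$ of the perturbation functions. All that remains is a translation of coordinates, so no serious obstacle is anticipated; the work is keeping careful track of what is an equation in $\mathds{R}$ and what is an equation in $\mathds{R}/2\pi\mathds{Z}$, since both $R_{K}$ and the notation $S+(h,*)$, $S+(*,h)$ are defined modulo $2\pi$.

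The first step is to recall from Lemma \ref{critical point} that
\[
\theta_{0}-\theta_{1}\equiv -f_{1}'(\eta_{0})\pmod{2\pi},\qquad \eta_{0}=\eta_{1}\equiv -f_{2}'(\theta_{1})\pmod{2\pi}.
\]
Working in $(\mathds{R}/2\pi\mathds{Z})\oplus(\mathds{R}/2\pi\mathds{Z})$ we may therefore write
\[
(\theta_{0},\eta_{0}) \;=\; \bigl(\theta_{1}-f_{1}'(\eta_{1}),\,\eta_{1}\bigr).
\]

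The second step applies the previous lemma, which gives $(\theta_{0},\eta_{0})\in R_{K}+(0,-\pi)$. Substituting the formula above, we obtain
\[
\bigl(\theta_{1}-f_{1}'(\eta_{1}),\,\eta_{1}\bigr)\in R_{K}+(0,-\pi),
\]
and so by the very definition of the notation $S+(h,*)$ we have
\[
(\theta_{1},\eta_{1})\in R_{K}+(0,-\pi)+(f_{1}',*).
\]

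Finally, the second congruence above says precisely that $(\theta_{1},\eta_{1})$ lies on the locus $\{\eta=-f_{2}'(\theta)\}\subset(\mathds{R}/2\pi\mathds{Z})^{2}$ (note that $f_{2}'$ is $2\pi$-periodic since it is the derivative of a $2\pi$-periodic function, so this locus is well defined on the torus). Intersecting the two conditions proves the lemma. The only subtlety worth a sentence in the write-up is to remark that the shifts $(0,-\pi)$ and $(f_{1}',*)$ commute as operations on subsets of the torus, so the parenthesization in the statement is unambiguous.
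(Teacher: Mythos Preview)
Your proof is correct and matches the paper's approach exactly: the paper simply writes ``Combining this lemma and Lemma~\ref{critical point}, we get'' before stating the result, and your argument is precisely the two-line computation that this sentence encodes. The care you take with the $2\pi$-periodicity and the commutativity of the shifts is appropriate, though the paper leaves all of it implicit.
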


\section{Proof of the Theorem and Corollaries }
\subsection{Proof of the main theorem}
Now suppose $K\subset S^{3}$ is a nontrivial knot. Denote the set $R_{K}\cap\{\eta\neq2\mathds{Z}\pi\}$ by $R^{*}_{K}$. For $r=\frac{p}{q}$, we define the subsets $S(r)$ and $\widehat {S}(r)$ to be:
$$
S(r):= \{(\theta,\eta)|(p\theta+q\eta)\in2\mathds{Z}\pi \text{ or } (p\theta+p\pi+q\eta)\in2\mathds{Z}\pi\}
$$
$$\widehat{S}(r):=\{(\theta,\eta)|(p\theta+q\eta)\in\mathds{Z}\pi\}$$

\begin{rmk}
When $p$ is odd, we have $\widehat{S}(r)=S(r)$.
\end{rmk}
\begin{lem}\label{not intersect}
If $r$ is an $SU(2)$-cyclic surgery, then $R^{*}_{K}\cap S(r)=\emptyset$.
If $r$ is an $SO(3)$-cyclic surgery, then $R^{*}_{K}\cap \widehat{S}(r)=\emptyset$.
\end{lem}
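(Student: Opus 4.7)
The plan is to show that any point of $R_K^*\cap S(r)$ (resp.\ $R_K^*\cap\widehat{S}(r)$) yields a non-cyclic $SU(2)$- (resp.\ $SO(3)$-) representation of $\pi_1(K(r))$, contradicting the cyclicity hypothesis. Since $\pi_1(K(r))$ is obtained from $\pi_1(S^3\setminus N(K))$ by killing the normal closure of $m^p l^q$, it is enough to produce a representation $\rho$ of the knot group with $\rho(m^p l^q)=1$ (for $SU(2)$) or $\rho(m^p l^q)=\pm 1$ (for $SO(3)$) whose image is non-cyclic in the target.

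For a point $(\theta,\eta)\in R_K^*$ with associated representation $\rho$, the elements $\rho(m)$ and $\rho(l)$ are simultaneously diagonal, so $\rho(m^p l^q)=\mathrm{diag}(e^{i(p\theta+q\eta)},e^{-i(p\theta+q\eta)})$. Thus the $SU(2)$-descent condition is $p\theta+q\eta\in 2\pi\mathbb{Z}$ and the $SO(3)$-descent condition is the weaker $p\theta+q\eta\in\pi\mathbb{Z}$, which is exactly $(\theta,\eta)\in\widehat{S}(r)$. The definition of $S(r)$ contains a second alternative $p\theta+p\pi+q\eta\in 2\pi\mathbb{Z}$; to handle it I would invoke property~3 of Lemma~\ref{property of pillowcase} to replace $(\theta,\eta)$ by $(\theta+\pi,\eta)$, which is still in $R_K^*$ (since $\eta$ is unchanged) and now satisfies the first alternative.

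In the $SU(2)$ case, non-cyclicity of the image of the descended representation follows immediately from property~1 of Lemma~\ref{property of pillowcase}, since $\eta\notin 2\pi\mathbb{Z}$ and the map $\pi_1(S^3\setminus N(K))\to\pi_1(K(r))$ is surjective. For the $SO(3)$ case, I would argue by contradiction: suppose $\bar\rho:\pi_1(K(r))\to SO(3)$ has cyclic image $\langle\bar g\rangle$, and let $\tilde g\in SU(2)$ be a lift. Then the image of $\rho$ in $SU(2)$ is contained in the preimage $\{\pm\tilde g^n:n\in\mathbb{Z}\}$, which is abelian. Hence $\rho$ factors through the abelianization $H_1(S^3\setminus N(K))=\mathbb{Z}\langle m\rangle$; since $l$ is null-homologous this forces $\rho(l)=1$ and $\eta\in 2\pi\mathbb{Z}$, contradicting $(\theta,\eta)\in R_K^*$.

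The main subtlety is the $SO(3)$ step: the passage from cyclic image of $\bar\rho$ to abelian image of $\rho$ uses the observation that the preimage of a cyclic subgroup of $SO(3)$ under $SU(2)\to SO(3)$ is always abelian (a consequence of the fact that $-1\in SU(2)$ is central, so any two lifts of powers of $\bar g$ automatically commute). Everything else is a straightforward bookkeeping exercise with the structure of $R_K$ established in Lemma~\ref{property of pillowcase}.
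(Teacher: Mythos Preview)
Your proposal is correct and follows essentially the same approach as the paper. The paper's proof handles the $SU(2)$ case exactly as you do---using that $\rho(m^pl^q)=1$ forces a factorization through $\pi_1(K(r))$, invoking property~(1) of Lemma~\ref{property of pillowcase} for non-cyclicity, and then property~(3) to absorb the second alternative in $S(r)$---and then dispatches the $SO(3)$ case with the phrase ``can be proved similarly.'' Your write-up supplies the detail the paper omits there: the observation that the preimage in $SU(2)$ of a cyclic subgroup of $SO(3)$ is abelian (because $-1$ is central), hence a cyclic $\bar\rho$ forces $\rho$ to factor through $H_1$ and kill $l$, contradicting $\eta\notin 2\pi\mathds{Z}$.
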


\begin{proof}
If $(\theta,\eta)\in R^{*}_{K}$ satisfies $p\theta+q\eta\in2\mathds{Z}\pi$, then it gives a representation $\rho:\pi_{1}(S^{3}-N(K))\rightarrow SU(2)$ such that $\rho(pm+ql)=1\in SU(2)$. Thus $\rho$ factors through $\pi_{1}(K(r))$. By (1) of Lemma \ref{property of pillowcase}, $\rho$ is non-cyclic. We get the contradiction since $r$ is a $SU(2)$-cyclic surgery. We see that $R^{*}_{K}\cap \{(\theta,\eta)|(p\theta+q\eta)\in2\mathds{Z}\pi\}=\emptyset$. By (3) of Lemma\ref{property of pillowcase}, we have $R^{*}_{K}+(\pi,0)=R^{*}_{K}$. Thus we also have $R^{*}_{K}\cap \{(\theta,\eta)|(p\theta+p\pi+q\eta)\in2\mathds{Z}\pi\}=\emptyset$. We proved the first assertion. The second assertion can be proved similarly.
\end{proof}

Since we are considering the subsets of $(\mathds{R}/2\pi \mathds{Z})\bigoplus(\mathds{R}/2\pi \mathds{Z})$, it will be convenient to fix a region $W=\{(\theta,\eta)|\theta\in (-\infty,\infty), \eta\in [0,2\pi]\}\subset \mathds{R}^{2}$. We define $W^{*}$ to be $\{(\theta,\eta)|\theta\in (-\infty,\infty), \eta\in (0,2\pi)\}$. We can work in $W$ and $W^{*}$ and then project to $(\mathds{R}/2\pi \mathds{Z})\bigoplus(\mathds{R}/2\pi \mathds{Z})$.

For two different numbers $r_{1}=\frac{p_{1}}{q_{1}},r_{2}=\frac{p_{2}}{q_{2}}$. We define another two numbers:
$$d_{1}(r_{1},r_{2}) = \left\{
  \begin{array}{l l}
    \frac{2\pi|p_{1}|}{\Delta(r_{1},r_{2})} & \quad \text{if $p_{2}$ is even}\\
     \frac{\pi|p_{1}|}{\Delta(r_{1},r_{2})} & \quad \text{if $p_{2}$ is odd}
  \end{array} \right.;
  d_{2}(r_{1},r_{2}) = \left\{
  \begin{array}{l l}
    \frac{2\pi|p_{2}|}{\Delta(r_{1},r_{2})} & \quad \text{if $p_{1}$ is even}\\
     \frac{\pi|p_{2}|}{\Delta(r_{1},r_{2})} & \quad \text{if $p_{1}$ is odd}
  \end{array} \right.$$

 The intersection $S(r_{i})\cap W^{*} $ are just some line segments of slope $-r_{i}$ and $S(r_{1})\cap S(r_{2})\cap W^{*} $ consists of isolated points. We say two intersection points in $S(r_{1})\cap S(r_{2})\cap W^{*}$ are adjacent in $S(r_{i})$ $(i=1,2)$ if they lie in the same component of  $S(r_{i})\cap W^{*} $ and there is no intersection point between them. We define two intersection points in $\widehat{S}(r_{1})\cap \widehat{S}(r_{2})\cap W^{*}$ to be adjacent in $\widehat{S}(r_{i})$ in a similar way.

The following lemma is easy to prove:

\begin{lem}\label{intersection}
(1) If two intersection points $(\theta,\eta),(\theta',\eta')\in S(r_{1})\cap S(r_{2})\cap W^{*} $ are adjacent in $S(r_{i})$, then $|\eta-\eta'|=d_{i}(r_{1},r_{2}) $ $(i=1,2)$.

(2) If two intersection points $(\theta,\eta),(\theta',\eta')\in \widehat{S}(r_{1})\cap \widehat{S}(r_{2})\cap W^{*} $ are adjacent in $\widehat{S}(r_{i})$, then $|\eta-\eta'|=\frac{\pi|p_{i}|}{\Delta(r_{1},r_{2})} $ $(i=1,2)$.

(3) For $(\theta,\eta)\in S(r_{1})\cap S(r_{2})\cap W^{*}$, if $\eta> d_{i}(r_{1},r_{2})$, then we can find $(\theta',\eta')\in S(r_{1})\cap S(r_{2})\cap W^{*}$ such that they are adjacent in $S(r_{i})$ and $\eta'<\eta$. If $\eta< 2\pi-d_{i}(r_{1},r_{2})$, then we can find $(\theta',\eta')\in S(r_{1})\cap S(r_{2})\cap W^{*}$ such that they are adjacent in $S(r_{i})$ and $\eta'>\eta$.

(4) For $(\theta,\eta)\in \widehat{S}(r_{1})\cap \widehat{S}(r_{2})\cap W^{*}$, if $\eta>\frac{\pi|p_{i}|}{\Delta(r_{1},r_{2})}$, then we can find $(\theta',\eta')\in \widehat{S}(r_{1})\cap \widehat{S}(r_{2})\cap W^{*}$ such that they are adjacent in $\widehat{S}(r_{i})$ and $\eta'<\eta$. If $\eta< 2\pi-\frac{\pi|p_{i}|}{\Delta(r_{1},r_{2})}$, then we can find $(\theta',\eta')\in \widehat{S}(r_{1})\cap \widehat{S}(r_{2})\cap W^{*}$ such that they are adjacent in $\widehat{S}(r_{i})$ and $\eta'>\eta$.
\end{lem}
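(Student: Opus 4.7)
The plan is to recognize $S(r_i)$ and $\widehat{S}(r_i)$ as lattices of parallel lines in the $(\theta,\eta)$-plane and then compute intersections by direct parametric substitution. The parity cases in the definition of $d_i$ will emerge naturally from this calculation.

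I would first simplify $S(r)$ via the parity of $p$: when $p$ is even, the second clause $p\theta+p\pi+q\eta\in 2\mathds{Z}\pi$ coincides with the first, so $S(r)=\{p\theta+q\eta\in 2\mathds{Z}\pi\}$, a family of lines of slope $-p/q$ with spacing $2\pi$ in the linear functional $p\theta+q\eta$. When $p$ is odd, the second clause gives $p\theta+q\eta\in (2\mathds{Z}+1)\pi$, so $S(r)=\widehat{S}(r)=\{p\theta+q\eta\in \mathds{Z}\pi\}$, with spacing $\pi$ in the linear functional. In either case $S(r)\cap W^*$ is a disjoint union of parallel line segments of slope $-p/q$.

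For parts (1) and (2), I fix a component $L$ of $S(r_i)\cap W^*$ (assuming $p_i\neq 0$; the case $p_i=0$ forces $r_i=0$ and makes $S(r_i)\cap W^*$ empty, so the statement is vacuous there). Writing $L$ as $p_i\theta+q_i\eta=c$ and parametrizing by $\theta=(c-q_i\eta)/p_i$, substitution into the linear form for $S(r_j)$ (with $\{i,j\}=\{1,2\}$) gives
$$p_j\theta+q_j\eta \;=\; \frac{p_jc+(p_iq_j-p_jq_i)\eta}{p_i}.$$
Setting this equal to an element of the appropriate lattice ($2\mathds{Z}\pi$ or $\mathds{Z}\pi$) shows that the admissible $\eta$-values form an arithmetic progression with common difference $2\pi|p_i|/\Delta(r_1,r_2)$ when $p_j$ is even and $\pi|p_i|/\Delta(r_1,r_2)$ when $p_j$ is odd. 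This is exactly $d_i(r_1,r_2)$, proving (1); the analogous computation with $\mathds{Z}\pi$ uniformly in place of the case split proves (2) with spacing $\pi|p_i|/\Delta(r_1,r_2)$.

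For parts (3) and (4), given an intersection point $(\theta,\eta)\in S(r_1)\cap S(r_2)\cap W^*$ with $\eta>d_i(r_1,r_2)$, I move along the component of $S(r_i)$ through it by exactly one step of the arithmetic progression found above, in the direction of decreasing $\eta$. The new point has $\eta'=\eta-d_i(r_1,r_2)$, which is strictly positive by hypothesis and strictly less than $2\pi$ since $\eta<2\pi$, so it lies in $W^*$. There is no constraint from $\theta$ because $W^*$ is unbounded in the $\theta$-direction, and the fact that $d_i$ is exactly the step-size ensures no intersection point lies strictly between these two points on $L$. The symmetric inequality $\eta<2\pi-d_i(r_1,r_2)$ and the $\widehat{S}$ variants are handled identically. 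The argument is essentially bookkeeping; the only thing to keep track of carefully is the parity case distinction and the degenerate slopes $p_i=0$ or $q_i=0$, neither of which poses a real obstacle.
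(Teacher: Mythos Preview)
Your proposal is correct. The paper itself omits the proof entirely, stating only that the lemma ``is easy to prove,'' and your direct computation---reducing $S(r)$ to a family of parallel lines via the parity of $p$, parametrizing a component of $S(r_i)$ by $\eta$, and reading off the arithmetic-progression spacing from the linear form $p_j\theta+q_j\eta$---is exactly the straightforward argument the paper has in mind.
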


Now we can start the proof of our main theorem:

\begin{proof} [Proof of Theorem \ref{main theorem}]
 Let $r_{1},r_{2}$ be two $SU(2)$-cyclic surgeries. Since the theorem is trivial when $r_{1}=r_{2}$, we always assume that $r_{1}\neq r_{2}$. By Theorem \ref{|r|>2}, we have $|r_{i}|>2$. Moreover, when $r_{1}$ or $r_{2}$ equals $\frac{1}{0}$, the identities in the theorem and corollaries can be easily deduced from Theorem $\ref{|r|>2}$. Thus we can assume $p_{i}\neq0$ and $q_{i}\neq 0$. Suppose $d_{1}(r_{1},r_{2})+d_{2}(r_{1},r_{2})<2\pi$. By Lemma \ref{not intersect} and (4) of Lemma \ref{property of pillowcase}, we have $R_{K}^{*}\cap (S(r_{1})\bigcup S(r_{2})\bigcup \{\theta=k\pi\})=\emptyset$.  We will construct a broken line $L:[-1,1]\rightarrow W$ such that $Im(L)\subset S(r_{1})\bigcup S(r_{2})\bigcup \{\theta=k\pi\}$. There are two cases:

(1) Suppose $r_{1}<-2<2< r_{2}$. Let $L(0)=(0,\pi)$. Then as $t$ increases, $L$ first goes up along $\theta=0$ to $(0,2\pi)$. Since $(0,2\pi)\in S(r_{2})$, $L$ can go down along $S(r_{2})$ to the lowest intersection point $(\theta_{1},\eta_{1})\in S(r_{1})\cap S(r_{2})\cap W^{*}$ on this line segment of $S(r_{2})$. By (3) of Lemma \ref{intersection}, we have $\eta_{1}\leq d_{2}(r_{1},r_{2})$. By our assumption, we have $\eta_{1}<2\pi-d_{1}(r_{2},r_{2})$.

Again by Lemma \ref{intersection}, $(\theta_{1},\eta_{1})$ is not the highest intersection point in the component of  $S(r_{1})\cap W^{*}$ which contains it. Thus $L$ can go along $S(r_{1})$ to the highest intersection point. Notice that this point is still in $W^{*}$. After that, $L$ again goes along $S(r_{2})$ to the lowest intersection point. Repeat this procedure until $L$ hits the line ${\theta=\pi}$. Then $L$ goes along ${\theta=\pi}$ to the point $(\pi,\pi)$. We have defined $L(t)$ for $t\in [0,1]$. Reflecting along $(0,\pi)$, we can define $L(t)$ for $t\in [-1,0]$.

\begin{figure}[htbp]
\centering
\includegraphics[angle=0,width=1.0\textwidth]{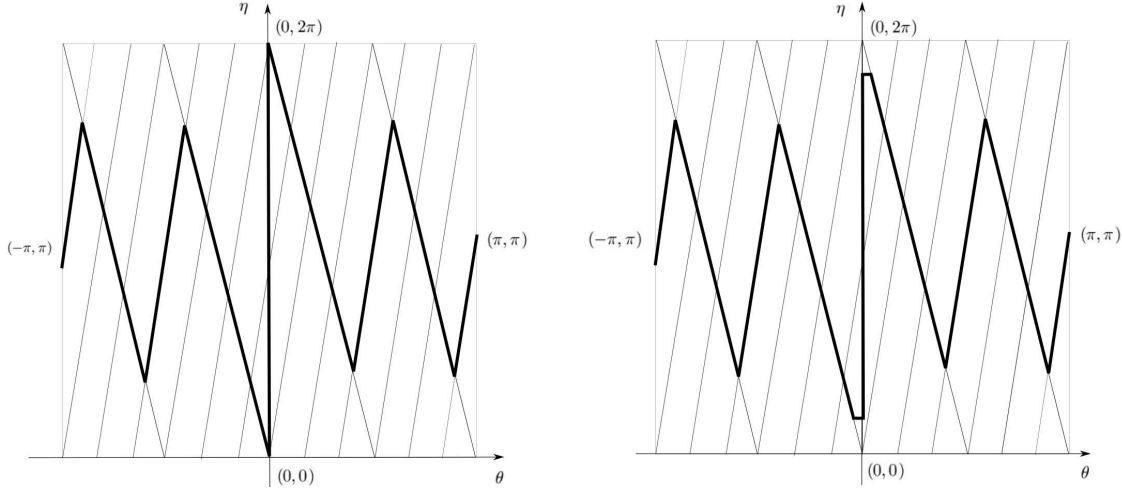}
\caption{$L$ (left) and $\widehat{L}$ (right) when $r_{1}=-3,r_{2}=4$} \label{meinv}
\end{figure}

(2) Suppose $r_{1},r_{2}$ are of the same sign. We do the case $2<r_{1}<r_{2}$ and the other case is similar. Set $L(0)=(0,\pi)$ and let $L$ goes along $\theta=0$ to $(0,2\pi)$. Then $L$ moves down alone $S(r_{1})$ to the lowest intersection point in $W^{*}$. After that $L$ moves along $S(r_{2})$ to the highest intersection point. The difference from case (1) is that we repeat this procedure until $L$ intersects the line segment $l\subset S(r_{1})$ which passes through $(\pi,0)$. It is easy to see that this happens before $L$ hits $\theta=\pi$. Then $L$ goes along $l$ to $(\pi,0)$ and then goes along $\theta=\pi$ to $(\pi,\pi)$.  By reflecting along the point $(0,\pi)$, we define $L(t)$ for all $t\in[-1,1]$.
\begin{figure}[htbp]
\centering
\includegraphics[angle=0,width=1.0\textwidth]{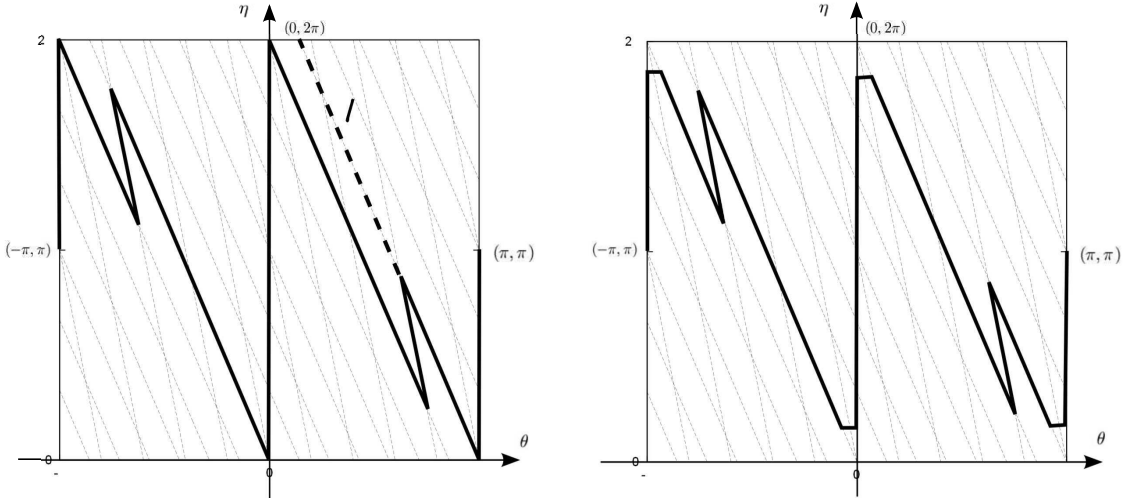}
\caption{L (left) and $\widehat{L}$ (right) when $r_{1}=\frac{7}{3},r_{2}=5$} \label{meinv}
\end{figure}

We denote the image of $L$ by $\mathit{Im}(L)\subset W$. In both cases, we have $\mathit{Im}(L)\subset S(r_{1})\cup S(r_{2})\cup \{\theta=k\pi\}$. Thus $R_{K}^{*}\cap \mathit{Im}(L)=\emptyset$. The image of $L$ intersects the line $\eta=0$ and $\eta=2\pi$ at $(0,0), (0,2\pi)$ in case (1) and at $(0,0), (0,2\pi), (\pi,0),(-\pi,2\pi)$ in case (2). We need to do small modification around these points. Take the point $(0,2\pi)$ for example. We choose a small neighborhood $U$ of $(0,2\pi)$ and remove $\mathit{Im}(L)\cap U$. Then we replace it with a short horizontal line segment $\eta=2\pi-\varepsilon$. By 5) of Lemma \ref{property of pillowcase}, after doing this modification, we still get a map $\widehat{L}:[-1,1]\rightarrow W^{*} $ such that $\mathit{Im}(\widehat{L})\cap R_{K}=\emptyset$. Moreover, $\mathit{Im}(\widehat{L})$ is symmetric under the reflection about $(0,\pi)$. Suppose $\widehat{L}(t)=(\theta(t),\eta(t))$.  By the compactness of $R_{K}$, there exists a small neighborhood $N$ of $\mathit{Im}(\widehat{L})$ such that $N\cap R_{K}=\emptyset$.

In case (1), ``$\widehat {L}$ goes forward'', which means that $\theta(t)\geq\theta(t')$ if $t\geq t'$. Since $(0,\pi),(\pm\pi,\pi)$ $\in \mathit{Im}(\widehat{L})$ and $\mathit{Im}(\widehat{L})$ is symmetric under the reflection of $(0,\pi)$, there exists a smooth odd function $g_{2}$ with period $2\pi$ such that the graph $\{\eta=g_{2}(\theta)\}$ is contained in $N+(0,-\pi)$. Thus $R_{K}+(0,-\pi)$ does not intersect the graph of $g_{2}$. We can choose an even, $2\pi$-periodic function $f_{2}$ such that $f'_{2}=-g_{2}$. Set $f_{1}\equiv0$ and use Lemma \ref{boundary value}. We see that $\widehat{CS}$ has no critical point, which contradicts Theorem \ref{existence of critical points}.
\begin{figure}[htbp]
\centering
\includegraphics[angle=0,width=1.0\textwidth]{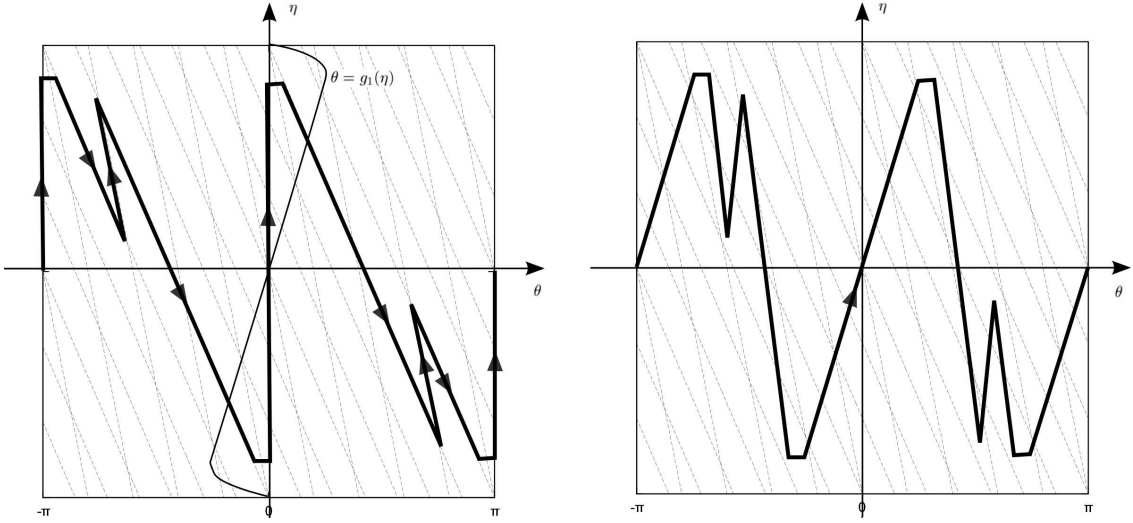}
\caption{The iamge of $\widehat{L}+(0,-\pi)$  (left) and $\widehat{L}+(0,-\pi)+(g_{1},*)$ (right) when $r_{1}=\frac{7}{3},r_{2}=5$} \label{meinv}
\end{figure}

In case (2), $\widehat {L}$ does not always go forward and our argument needs to be modified. Take the case $2<r_{1}<r_{2}$ for example (see Figure 3). By the construction of $\widehat {L}$, there exists a small $\epsilon>0$ such that $\mathit{Im}(\widehat {L})$ is contained in the region $\epsilon<\eta<2\pi-\epsilon$. Choose a number $r_{0}\in (r_{1},r_{2})$. There exist an odd, periodic-$2\pi$ function $\theta=g_{1}(\eta)$ such that $g_{1}(\eta)=\frac{\eta}{r_{0}},\forall \eta\in [\epsilon,2\pi-\epsilon]$. The image of $\widehat {L}$ only consists of the following 4 types of segments:
 \begin{itemize}
 \item i) horizontal line that goes forward,
  \item ii) going down line of slope $-r_{1}$,
  \item iii) going up line of slope     $-r_{2}$,
  \item iv) going up line of slope $+\infty$.
  \end{itemize}
  Therefore, it is not difficult to see that $\mathit{Im}(\widehat{L})+(0,-\pi)+(g_{1},*)$ is a broken line that goes forward (it's just a shearing of $\mathit{Im}(\widehat{L})+(0,-\pi)$). Thus we can find an odd, $2\pi$-periodic function $g_{2}$ such that the graph $\{\eta=g_{2}(\theta)\}$ is contained in $N+(0,-\pi)+(g_{1},*)$. We see that $R_{K}+(0,-\pi)+(g_{1},*)$ does not intersect the graph $\{\eta=g_{2}(\theta)\}$. We can find even, $2\pi-$periodic  function $f_{1},f_{2}$ such that $f_{1}'=g_{1}, f_{2}'=-g_{2}$. Using Lemma \ref{boundary value} and Theorem \ref{existence of critical points}, we get the contradiction again.

The $SO(3)$-cyclic case can be proved similarly by considering $\widehat {S}(r_{i})$ instead of $S(r_{i})$.\end{proof}

\begin{rmk}
Actually, we have proved that if $r_{1},r_{2}$ are both $SU(2)$-cyclic, then $d_{1}(r_{1},r_{2})+d_{2}(r_{1},r_{2})\geq 2\pi$. When $p_{i}$ is odd, this gives the conclusions of Corollary \ref{cor1}.
\end{rmk}

 Corollary \ref{cor1}, Corollary \ref{cor2} and Corollary \ref{cor4} are easy to prove using the main theorem.

\subsection{Relation with the Alexander polynomial}

In this subsection, we will give some relations between the $SU(2)$-cyclic surgeries and the Alexander polynomial and prove Corollary \ref{cor3}.

 Suppose $d_{1}(r_{1},r_{2})+d_{2}(r_{1},r_{2})=2\pi$ (for example $r_{1}=-r_{2}=2k$) and $r_{1},r_{2}$ are both $SU(2)$-cyclic. Let's try to repeat the argument as before. We do the case $r_{2}<0<r_{1}$ and the other cases are similar. Consider $S(r_{i})\subset (\mathds{R}/2\pi \mathds{Z})\bigoplus(\mathds{R}/2\pi \mathds{Z})$ $(i=1,2)$, then $R_{K}^{*}\cap S(r_{i})=\emptyset$. We now construct $L:[-1,1]\rightarrow W$. Set $L(0)=(0,\pi)$ and $L$ goes upwards along $\theta=0$ to $(0,2\pi)$. Then $L$ goes down along $S(r_{2})$ the the lowest intersection point $(\theta_{1},\eta_{1})\in S(r_{1})\cap S(r_{2})\cap W$. After that, $L$ goes up along $S(r_{1})$ to the highest intersection point $(\theta_{2},\eta_{2})\in S(r_{1})\cap S(r_{2})\cap W$. As shown in Figure 4, it is possible that the lowest intersection point in $(\theta_{2},\eta_{2})\in S(r_{1})\cap S(r_{2})\cap W^{*}$ is also the highest one. Thus we can only work in $W$ instead of $W^{*}$. This is different from the case when $d_{1}(r_{1},r_{2})+d_{2}(r_{1},r_{2})<2\pi$. We repeat this procedure and get $L:[-1,1]\rightarrow W$. Then we need to modify $L$ to $\widehat {L}$ whose image is contained in $W^{*}$. The trouble appears: $L$ may contain some points like $(\theta_{0},0) \text{ or } (\theta_{0},2\pi)$ with $\theta_{0}\neq 0 \text{ or} \pm \pi$. In general, we don't have the result like 5) of Lemma 2.8 which allows us to modify $L$ near these points without intersecting $R_{K}$.

\begin{figure}[htbp]
\includegraphics[angle=0,width=1.0\textwidth]{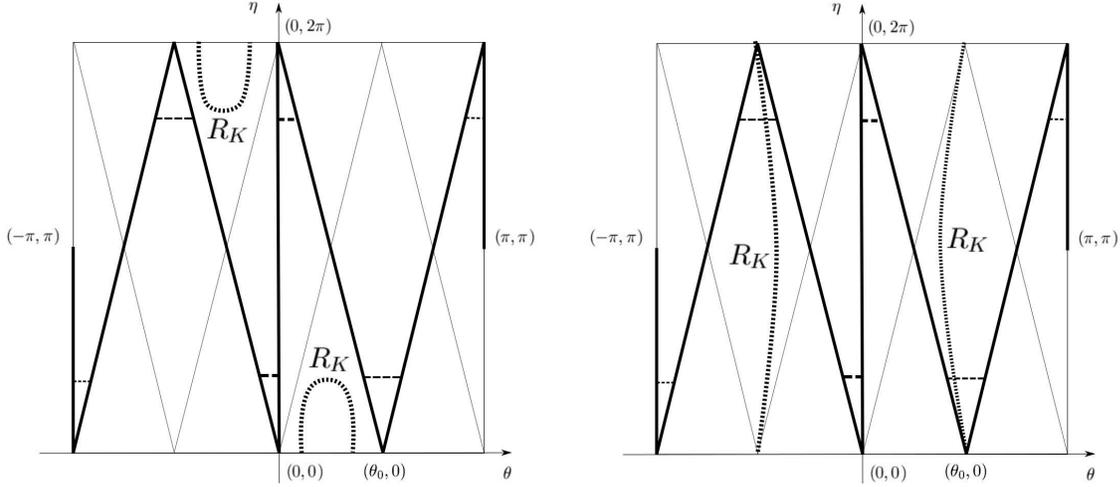}
\caption{When $r_{1}=4,r_{2}=-4$, we can modify $L$ near $(\theta_{0},0)$ in the left picture but we can't modify $L$ in the right picture.} \label{meinv}
\end{figure}

 We just do the $(\theta_{0},0)$ case and the $(\theta_{0},2\pi)$ case is similar. Suppose that we can choose a small neighborhood $U$ of $(\theta_{0},0)$ such that $R_{K}^{*}\cap U=\emptyset$. We just replace $Im({L})\cap U$ by some short, horizontal line $l\subset U\cap W^{*}$. If we can do this for every point in $Im(L)\cap (W\backslash W^{*})$, we can construct $\widehat{L}$ and get the contradiction as before. If we can't do this for some point $(\theta_{0},0)\in S(r_{1})$, then there exist a sequence $(\theta_{n},\eta_{n})\in R_{K}^{*}$ converging to $(\theta_{0},0)$ as $n\rightarrow \infty$. Each $(\theta_{n},\eta_{n})$ gives an irreducible representation $\rho_{n}:\pi_{1}(S^{3}-N(K))\rightarrow SU(2)$. It is easy to see that these representations are also irreducible as $SL(2,\mathds{C})$ representations. By the compactness of $SU(2)$ representation variety, $\rho_{n}$ converge to some $\rho_{0}$  after taking a subsequence. We will get $(\theta_{0},0)\in S(r_{1})$ if we restrict $\rho_{0}$ to the boundary. Recall that we have a representation $\pi_{1}(S^{3}-K)\rightarrow \pm 1\rightarrow SU(2)$ such that $m$ is mapped to $-1$. After multiplying $\rho_{0}$ by this representation if necessary, we get a representation of $\rho'_{0}$ $\pi_{1}(S^{3}-N(K))$ such that $\rho'_{0}(p_{1}m+q_{1}l)=1$. Since $r_{1}$ is an $SU(2)$-cyclic surgery, this representation must be cyclic. In particular, this implies that $\rho_{0}$ is cyclic. Thus we get a sequence of irreducible $SL(2,\mathds{C})$ representations of $\pi_{1}(S^{3}-N(K))$ converging to a reducible $SL(2,\mathds{C})$ representation $\rho_{0}$ with $\rho_{0}(m)=\left(\begin{smallmatrix}
 e^{i\theta_{0}} & 0 \\
 0&e^{-i\theta_{0}}
\end{smallmatrix}\right)$.

We apply the following proposition in \cite{CCGLS}:
\begin{pro}[\cite{CCGLS}]
Let $M$ be the complement of a knot $K$ in a homology 3-sphere. Suppose that $\rho$ is a reducible representation of $\pi_{1}(M)$ such that the character of $\rho$ lies on a component of $\chi(M)$ which contains the character of an irreducible representation. Then $\rho(m)$ has an eigenvalue whose square is a root of $\Delta_{K}$ (the Alexander polynomial of $K$).
\end{pro}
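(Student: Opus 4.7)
The plan is to analyze the first-order deformation theory of $\rho$ inside the $SL_2(\mathbb{C})$-representation variety and then to match the resulting cohomological obstruction to a non-vanishing condition on the Alexander module.

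First I would normalize. After conjugating in $SL_2(\mathbb{C})$, the reducible $\rho$ can be taken diagonal, $\rho(\gamma)=\mathrm{diag}(\chi(\gamma),\chi(\gamma)^{-1})$, where $\chi\colon\pi_1(M)\to\mathbb{C}^{*}$ factors through $H_1(M)\cong\mathbb{Z}$; set $\lambda=\chi(m)$. The Zariski tangent space to $R(M)=\mathrm{Hom}(\pi_1(M),SL_2(\mathbb{C}))$ at $\rho$ equals $Z^{1}(\pi_1(M),\mathrm{Ad}\,\rho)$, with the orbit directions corresponding to coboundaries. The hypothesis that $\chi_\rho$ lies on a component of $\chi(M)$ that also carries irreducible characters means that $\rho$ is a limit of irreducible representations, and hence the tangent space at $\rho$ contains a cocycle $u$ that is not tangent to the reducible stratum.

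Next I would exploit the weight decomposition. Since $\lambda\neq\pm1$ (the case $\lambda=\pm1$ has to be ruled out separately, e.g.\ using that the component contains irreducibles), $\mathrm{Ad}\,\rho(m)$ diagonalizes $\mathfrak{sl}_2$ as $\mathfrak{h}\oplus\mathbb{C}_{\alpha}\oplus\mathbb{C}_{\alpha^{-1}}$, where $\alpha=\chi^{2}$ and $\alpha(m)=\lambda^{2}$. This gives
\[
H^{1}(\pi_1(M),\mathrm{Ad}\,\rho)\cong H^{1}(\pi_1(M),\mathbb{C})\oplus H^{1}(\pi_1(M),\mathbb{C}_{\alpha})\oplus H^{1}(\pi_1(M),\mathbb{C}_{\alpha^{-1}}).
\]
The Cartan summand accounts for deformations staying inside the reducible locus, so the transverse cocycle $u$ produced in the previous step must have a non-zero component in $H^{1}(\pi_1(M),\mathbb{C}_{\alpha})$ or $H^{1}(\pi_1(M),\mathbb{C}_{\alpha^{-1}})$.

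Finally I would identify this twisted cohomology with the Alexander module. Since $\alpha$ factors through the deck transformation $t$ of the infinite cyclic cover $\widetilde{M}\to M$, with $t\mapsto \lambda^{2}$, a standard computation (e.g.\ by taking a CW decomposition and using Fox calculus) gives that $H^{1}(\pi_1(M),\mathbb{C}_{\alpha})$ equals the sum of a one-dimensional ``boundary contribution'' coming from the peripheral torus plus a contribution detected by the Alexander module $A(K)\otimes_{\mathbb{Z}[t^{\pm1}]}\mathbb{C}_{t=\lambda^{2}}$. The boundary term alone produces only reducible deformations tangent to the peripheral abelian family, so a genuinely new (irreducible) deformation forces the Alexander-module factor to be non-zero, i.e.\ $\Delta_{K}(\lambda^{2})=0$, which is the desired conclusion.

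The main obstacle is the middle step: turning an abstract first-order cocycle into a genuine deformation to irreducible representations and separating the ``free'' boundary cocycle from the informative Alexander cocycle. Both issues are handled by the hypothesis that $\chi_\rho$ already lies on a component containing irreducibles, which bypasses integrability obstructions (the deformation exists by assumption) and shows that the extra needed cocycle cannot be purely peripheral; formalizing this comparison is the technical heart of the proposition.
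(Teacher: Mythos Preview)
The paper does not actually prove this proposition: it is quoted verbatim from \cite{CCGLS} and applied as a black box, so there is nothing to compare your argument against on the paper's side.

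That said, your sketch is essentially the classical deformation-theoretic proof of this fact (going back to de Rham and Burde, and reproduced in \cite{CCGLS}), and the overall architecture is correct. One point deserves tightening: your description of $H^{1}(\pi_1(M),\mathbb{C}_{\alpha})$ as a ``one-dimensional boundary contribution plus an Alexander part'' is not accurate. When $\alpha$ is non-trivial one has $H^{0}(M;\mathbb{C}_{\alpha})=0$ and, via the infinite cyclic cover, $H^{1}(M;\mathbb{C}_{\alpha})\cong H_{1}(\widetilde{M};\mathbb{C})\otimes_{\mathbb{C}[t^{\pm1}]}\mathbb{C}_{t=\lambda^{2}}$ with no extra summand; this is non-zero precisely when $\Delta_{K}(\lambda^{2})=0$. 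The ``always-there'' tangent direction coming from the abelian curve of diagonal representations lives entirely in the Cartan summand $H^{1}(\pi_1(M),\mathbb{C})$, not in the off-diagonal pieces. Hence once you have produced a non-trivial class in $H^{1}(\pi_1(M),\mathbb{C}_{\alpha^{\pm1}})$ from the existence of nearby irreducibles, the conclusion $\Delta_{K}(\lambda^{2})=0$ follows immediately, and no separation of a ``peripheral'' cocycle is needed. With that simplification your argument goes through.
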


Using this theorem, we see that $e^{2i\theta_{0}}$ is a root of $\Delta_{K}$. Since $(\theta_{0},0)\in S(r_{1})$, we see that $\Delta_{K}$ has a root which is   a $p_{1}$-th root of unity for odd $p_{1}$ and $\frac{p_{1}}{2}$-th root of unity for even $p_{1}$.

By considering the intersection point $(\theta_{0},2\pi)\in Im(L)\cap \{\eta=2\pi\}$, we can get the same conclusion for $p_{2}$. In particular, we get the following:

\begin{pro}
 Suppose that $r_{1}=\frac{p_{1}}{q_{1}},r_{2}=\frac{p_{2}}{q_{2}}$ are two $SU(2)$-cyclic surgeries with  $d_{1}(r_{1},r_{2})+d_{2}(r_{1},r_{2})=2\pi$ and $p_{1},p_{2}$ even, then the Alexander polynomial of $K$ has a root which is either a $\frac{p_{1}}{2}$-th or a $\frac{p_{2}}{2}$-th root of unity.
\end{pro}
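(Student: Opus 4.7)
The plan is to revisit the broken line construction from the proof of the main theorem in the borderline case $d_{1}(r_{1},r_{2})+d_{2}(r_{1},r_{2})=2\pi$ and extract the Alexander polynomial condition as the obstruction that prevents the construction from giving an outright contradiction. First I would build a broken line $L:[-1,1]\to W$ exactly as before: start at $(0,\pi)$, run vertically to $(0,2\pi)$, then alternate along segments of $S(r_{1})$ and $S(r_{2})$ between adjacent intersection points in $S(r_{1})\cap S(r_{2})\cap W$, terminating at $(\pi,\pi)$, and extend by symmetry. The new feature in the borderline case is that Lemma \ref{intersection}(3) no longer lets us stay inside $W^{*}$: the jumps of size $d_{i}$ can send an adjacent intersection point all the way to $\eta=0$ or $\eta=2\pi$. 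Hence $\mathit{Im}(L)$ may contain ``bad'' boundary points $(\theta_{0},0)$ or $(\theta_{0},2\pi)$ with $\theta_{0}\neq 0,\pm\pi$, where part (5) of Lemma \ref{property of pillowcase} does not apply.

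Next I would try the same modification as in the main theorem proof, replacing $\mathit{Im}(L)\cap U$ by a short horizontal arc inside $W^{*}$ near each bad point. If every bad point admits a neighborhood $U$ with $U\cap R_{K}^{*}=\emptyset$, the modification produces $\widehat{L}$ with $\mathit{Im}(\widehat{L})\cap R_{K}=\emptyset$, and the shearing construction of $g_{1},g_{2}$, together with Lemma \ref{boundary value} and Theorem \ref{existence of critical points}, yields a contradiction; in that case the proposition holds vacuously. Otherwise, fix a bad point $(\theta_{0},0)$ lying on $\mathit{Im}(L)\cap S(r_{i})$ and extract a sequence $(\theta_{n},\eta_{n})\in R_{K}^{*}$ converging to it. Each $(\theta_{n},\eta_{n})$ is realized by an $SU(2)$ representation $\rho_{n}$ of $\pi_{1}(S^{3}-N(K))$ which, by part (1) of Lemma \ref{property of pillowcase}, is non-cyclic; because $\pi_{1}(S^{3}-N(K))$ is generated over $SU(2)$ in a way that makes cyclicity equivalent to reducibility over $SL(2,\mathbb{C})$, these are also irreducible as $SL(2,\mathbb{C})$-representations.

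By compactness of $SU(2)$, a subsequence of $\rho_{n}$ converges to some $\rho_{0}$ with $\rho_{0}(m)=\mathrm{diag}(e^{i\theta_{0}},e^{-i\theta_{0}})$ and $\rho_{0}(l)=1$. The key reducibility step is then: after multiplying $\rho_{0}$ by the sign character of $\pi_{1}(S^{3}-N(K))$ if necessary, the condition $(\theta_{0},0)\in S(r_{i})$ says that $\rho_{0}$ descends to $\pi_{1}(K(r_{i}))$, and by the $SU(2)$-cyclic hypothesis on $r_{i}$ this quotient representation must be cyclic, whence $\rho_{0}$ itself is reducible. Thus $\rho_{0}$ is a reducible limit of irreducible $SL(2,\mathbb{C})$-representations, and the hypotheses of the Cooper--Culler--Gillet--Long--Shalen proposition are met, giving that the square of an eigenvalue of $\rho_{0}(m)$, namely $e^{2i\theta_{0}}$, is a root of $\Delta_{K}$.

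Finally I would translate the constraint $(\theta_{0},0)\in S(r_{i})$ into the root of unity statement: since $p_{i}$ is even, both defining conditions of $S(r_{i})$ collapse at $\eta=0$ to $p_{i}\theta_{0}\in \pi\mathbb{Z}$, i.e.\ $e^{2i\theta_{0}}$ is a $\tfrac{p_{i}}{2}$-th root of unity. A bad point on the top edge $\eta=2\pi$ is handled identically and delivers the same conclusion for the appropriate index. The main obstacle I anticipate is the bookkeeping at this last step: one must confirm that $\rho_{n}$ being $SU(2)$-irreducible really implies $SL(2,\mathbb{C})$-irreducibility in this setting (a standard but not automatic fact, since conjugation in $SL(2,\mathbb{C})$ is more flexible), and one must carefully track which of $S(r_{1})$ or $S(r_{2})$ the bad point lies on, which is exactly why the conclusion tolerates a $\tfrac{p_{1}}{2}$-th \emph{or} $\tfrac{p_{2}}{2}$-th root of unity rather than pinning down a specific one.
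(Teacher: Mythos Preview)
Your proposal is correct and follows essentially the same approach as the paper: you rerun the broken-line construction in the borderline case, observe that the only obstruction to obtaining a contradiction is a bad boundary point $(\theta_{0},0)$ or $(\theta_{0},2\pi)$ approached by points of $R_{K}^{*}$, pass to a limiting representation, force it to be reducible via the $SU(2)$-cyclic hypothesis, and then invoke the Cooper--Culler--Gillet--Long--Shalen result to conclude that $e^{2i\theta_{0}}$ is a root of $\Delta_{K}$. Your bookkeeping concern at the end is well placed but resolves exactly as in the paper: for an $SU(2)$-valued representation, reducibility over $SL(2,\mathbb{C})$ is equivalent to having abelian image (unitarity gives an invariant complement), and abelian image forces $\rho(l)=1$ since $l$ lies in the commutator subgroup, so the $\rho_{n}$ are indeed $SL(2,\mathbb{C})$-irreducible.
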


Notice that if $K$ is amphichiral, then the $r$-surgery is $SU(2)$-cyclic implies that the $-r$-surgery is also $SU(2)$-cyclic. By Corollary \ref{cor2}, we get $r$ is an even integer and $d_{1}(r,-r)+d_{2}(r,-r)=2\pi$. Therefore, Corollary \ref{cor3} is a straightforward consequence of the proposition above.

\vskip 0.5 truecm

\end{document}